\theoremstyle{plain}
\newtheorem{thm}{Theorem}[section]
\newtheorem{prop}[thm]{Proposition}
\newtheorem{cor}[thm]{Corollary}
\newtheorem{lem}[thm]{Lemma}
\theoremstyle{definition}
\newtheorem{rem}[thm]{Remark}
\newtheorem*{rem*}{Remark}
\newtheorem{definition}[thm]{Definition}
\newcommand{\mathdcl}[1]{{\textup{#1}}}
\newcommand{\cpt}{\mathdcl{c}}
\newcommand{\nor}{\mathdcl{n}}
\newcommand{\RR}{\mathbb{R}}
\newcommand{\NN}{\mathbb{N}}
\newcommand{\CC}{\mathbb{C}}
\newcommand{\one}{\mathds{1}}
\newcommand{\eps}{\varepsilon}
\renewcommand{\phi}{\varphi}
\newcommand{\Linop}{\mathcal{L}}
\newcommand{\conj}[1]{\overline{#1}}
\newcommand{\clos}[1]{\overline{#1}}
\newcommand{\form}[1]{{\mathfrak{#1}}}
\newcommand{\restrict}[2]{\ensuremath{#1\raisebox{-0.4ex}{$|$}\strut_{#2}}}
\renewcommand{\Re}{\operatorname{Re}}
\renewcommand{\Im}{\operatorname{Im}}
\newcommand{\leconst}{\lesssim}
\newcommand{\dx}[1][x]{\,\mathrm{d}#1}
\newcommand{\Hm}[1][d-1]{\mathcal{H}^{#1}}
\DeclarePairedDelimiter\norm{\lVert}{\rVert}
\DeclarePairedDelimiter\abs{\lvert}{\rvert}
\let\old@norm\norm
\DeclareDocumentCommand{\smartnorm}{soG{\cdot}}{%
  \IfBooleanTF{#1}{\old@norm{#3}}%
  {%
    \IfValueTF{#2}{\old@norm[#2]{#3}}{\old@norm*{#3}}%
  }%
}
\def\norm{\smartnorm}
\newcommand{\scalar}[3][auto]{{% KEEP:
\ifthenelse{\equal{#1}{auto}}{\left(#2\mkern3mu{\mid}\mkern3mu #3\right)}{}
\ifthenelse{\equal{#1}{b}}{\bigl(#2\mkern3mu{\mid}\mkern3mu #3\bigr)}{}
\ifthenelse{\equal{#1}{B}}{\Bigl(#2\mkern3mu{\bigm|}\mkern3mu #3\Bigr)}{}
}}
\newcommand{\emphdef}[1]{\textbf{\boldmath #1\unboldmath}}
\newlist{romanenum}{enumerate}{1}
\setlist[romanenum]{label=\textup{(\roman*)},ref=\textup{(\roman*)},itemsep=0em,topsep=1ex}
\newlist{alenum}{enumerate}{1}
\setlist[alenum]{label=\textup{(\alph*)},ref=\textup{(\alph*)},itemsep=0em,topsep=1ex}
\newlist{parenum}{enumerate}{1}
\setlist[parenum]{label=\textup{\arabic*.},ref=\textup{\arabic*.},wide,noitemsep}
\newtoks\protect@toks
\def\pdef#1{\protect@toks=\expandafter{\the\protect@toks
 \pdef@#1}%
 \def#1}
\def\pdef@#1{\def#1{\protect#1}}
\newdimen\mex
\pdef\niv{\mathrel{\hbox{\hglue .2\mex
  \vrule \@height 1.33\mex \@width .06\mex
  \vrule \@height .06\mex \@width 1\mex
  \hglue .5\mex}}}
\newcommand{\ident}{\equiv}
\DeclareMathOperator{\sign}{sign}
\DeclareMathOperator{\Tr}{Tr}
\newcommand{\rmN}{\mathdcl{N}}
\newcommand{\rmW}{\mathdcl{W}}
\newcommand{\rmC}{\mathdcl{C}}
\newcommand{\fa}{\form{a}}
\newcommand{\fb}{\form{b}}
\renewcommand{\jmath}{j}
\let\oldmarginpar\marginpar
\renewcommand{\marginpar}[1]{\oldmarginpar{\raggedright\tiny #1}}
\let\oldbibitem\bibitem
\renewcommand{\bibitem}[2][]{\oldbibitem{#2}}
\title[Wentzell Laplacian via forms and approximative trace]{The Wentzell Laplacian via forms and the approximative trace}
\dedicatory{Dedicated to Jerry Goldstein on the occasion of his 80th birthday.}
\author[W. Arendt]{\sc Wolfgang Arendt}
\address{Wolfgang Arendt\\Institute of Applied Analysis\\Ulm University\\89069 Ulm\\Germany}
\email{wolfgang.arendt@uni-ulm.de}
\author[M. Sauter]{\sc Manfred Sauter}
\address{Manfred Sauter\\Institute of Applied Analysis\\Ulm University\\89069 Ulm\\Germany}
\email{manfred.sauter@uni-ulm.de}
\keywords{Wentzell boundary conditions, form methods, sectorial forms, approximative trace, irregular domains, semigroups, continuous kernel}
\subjclass[2020]{Primary: 35J05; Secondary: 47D60, 31C25, 46E35}
\begin{document}
\begin{abstract}
We use form methods to define suitable realisations of the Laplacian 
on a domain $\Omega$ with Wentzell boundary conditions, i.e.\ such that $\partial_\nor u + \beta u + \Delta u = 0$ holds in a suitable sense on the boundary of $\Omega$. For those realisations, we study their semigroup generation properties.
Using the approximative trace, 
we give a unified treatment that in part allows irregular and even fractal domains.
Moreover, we admit $\beta$ to be merely essentially bounded and complex-valued. If the domain is Lipschitz, we obtain a kernel continuous up to the boundary.
\end{abstract}

\begingroup
\renewcommand{\MakeUppercase}[1]{#1}
\maketitle
\endgroup

\section{Introduction}

In 2002 A.~Favini, G.R.~Goldstein, J.A.~Goldstein and S.~Romanelli~\cite{FGGR02} published an article in the Journal of Evolution Equations which turned out to be hugely influential and is among the most cited in this journal.
It treats parabolic equations with Wentzell boundary conditions on a suitably regular domain $\Omega$, whose boundary we will denote by $\Gamma$. One reason for the great popularity of the subject is certainly its importance for models: these boundary conditions govern a dynamic behaviour at the boundary.
Another reason is a challenge of analytical nature: in the $L^p$-context the natural space is $L^p(\Omega)\oplus L^p(\Gamma)$, as is shown in that paper.
So the behaviour in the interior and at the boundary is suitably decoupled. This technique was previously employed by H.~Amann and J.~Escher~\cite{AE96}.
The strategy in~\cite{FGGR02} is to prove m-dissipativity and to use the Hille--Yosida~theorem.
We refer to A.~Favini, G.R.~Goldstein, J.A.~Goldstein, E.~Obrecht and S.~Romanelli~\cite{FGGOR2016} for more recent contributions in this direction.

Shortly after the publication of~\cite{FGGR02}, a different approach via forms in $L^2(\Omega)\oplus L^2(\Gamma)$ was presented in~\cite{AMPR03} and utilised by D.~Mugnolo and S.~Romanelli in~\cite{MR06}.
Forms have the great advantage that the range condition comes for free.
The disadvantage is that the domain of the generator is not known precisely; it is the form domain which is given.
In~\cite{AMPR03} a semigroup is obtained on $L^2(\Omega)\oplus L^2(\Gamma)$ for Lipschitz domains.
It leaves the space $\{(u,u_\Gamma): u\in H^1(\Omega)\}$ invariant and one obtains a semigroup on $H^1(\Omega)$ that properly realises the Wentzell boundary conditions in sense of traces, see also~\cite{FGGOER03}.
Another most natural choice is to consider the space $C(\clos{\Omega})$. In both~\cite{FGGR02} and~\cite{AMPR03} this could be accomplished for $C^{2+\eps}$-domains in virtue of Schauder estimates.

M.~Warma~\cite{War06} and R.~Nittka~\cite{Nittka2010:thesis,Nit11} proved the necessary elliptic estimates to realise Wentzell--Robin boundary conditions on the space $C(\clos{\Omega})$ for a Lipschitz domain. Nittka's result even holds for arbitrary elliptic operators with real, bounded, measurable coefficients.

More recently, second-order evolution equations with Wentzell boundary conditions have been studied on the Koch snowflake domain in $\RR^2$ and certain other fractal extension domains; we refer to M.R.~Lancia and P.~Vernole~\cite{LV14} and M.~Hinz, M.R.~Lancia, A.~Teplyaev and P.~Vernole~\cite{HLTV18}.

The purpose of the present paper is as follows. We study the Laplacian $\Delta$ with Wentzell boundary conditions
\[
    \partial_\nor u + \beta\restrict{u}{\Gamma} + \restrict{(\Delta u)}{\Gamma} = 0,
\]
where $\Omega$ is a bounded open set with boundary $\Gamma=\partial\Omega$, and $\Gamma$ is equipped with a suitable finite Borel measure $\sigma$. In particular, if the $(d-1)$-dimensional Hausdorff measure $\Hm$ of $\Gamma$ is finite (which is the case for Lipschitz domains), then $\sigma=\Hm\niv\Gamma$ is a natural and suitable choice.
We make use of the approximative trace and Green's formula to realise the operator first on $L^2(\Omega)\oplus L^2(\Gamma)$, and then on a subspace $V$ of $H^1(\Omega)$ on which the boundary conditions are attained in the sense of traces.
This approach allows us to also consider irregular domains $\Omega$, including fractal domains for a suitable choice of $\sigma$.

In the special case of Lipschitz domains, however, we refine the corresponding results of~\cite{AMPR03} and recover those in~\cite{War06,Nittka2010:thesis,Nit11}.
In particular, we show the existence of a $C_0$-semigroup on $C(\clos{\Omega})$ if $\Omega$ has Lipschitz boundary. There is one further difference to~\cite{AMPR03,War06,Nittka2010:thesis,Nit11}. We admit $\beta\in L^\infty(\Gamma)$ to be complex-valued.
Thus the semigroup on $L^2(\Omega)\oplus L^2(\Gamma)$ is no longer submarkovian and not even positive.
We use the Beurling--Deny--Ouhabaz criterion to show that after rescaling this semigroup is $L^\infty$-contractive, without any positivity assumption, and then show the invariance of $F=\{(u,\restrict{u}{\Gamma}) : u\in C(\clos{\Omega})\}$ if $\Omega$ has Lipschitz boundary. In order to prove that the restriction of the semigroup to $F$ is strongly continuous, we argue in a similar way as Nittka in~\cite[Section~5.2]{Nittka2010:thesis} or~\cite[Section~4.2]{Nit11}.
Furthermore, if $\Omega$ is Lipschitz we show that the semigroup on $C(\clos{\Omega})$ has a continuous kernel on $\clos{\Omega}$ in a suitable sense.

\section{Form method for the incomplete case}\label{sec:fm-incompl}

Let $H$ be a complex Hilbert space.
We describe a method to associate in a natural way a holomorphic $C_0$-semigroup on $H$ to any sectorial form in $H$.
A \emphdef{sectorial form} in $H$ is a pair $(\fa,j)$ of a sesquilinear form 
\[
    \fa\colon D(\fa)\times D(\fa)\to\CC,
\]
where $D(\fa)$ is a complex vector space, and of a linear map $j\colon D(\fa)\to H$ with dense image such that there exist $\omega\in\RR$ and $\theta\in[0,\frac{\pi}{2})$ with
\[
    \fa(u,u) + \omega\norm{j(u)}_H^2 \in \Sigma_\theta
\]
for all $u\in D(\fa)$; here
\[
    \Sigma_\theta = \{ re^{i\alpha} : r\ge 0,\ \abs{\alpha}\le\theta\}
\]
denotes a closed complex sector that opens in direction of the positive real axis. 

To a sectorial form $(\fa,j)$ we associate the operator $A$ in $H$ defined as follows:  For all $x,y\in H$ one has $x\in D(A)$ and $Ax=y$ if and only if there exists a sequence $(u_n)_{n\in\NN}$ in $D(\fa)$ such that
\begin{romanenum}
\item $\displaystyle \lim_{n\to\infty} j(u_n)=x$ in $H$,
\item $\displaystyle \lim_{n,m\to\infty} \fa(u_n-u_m,u_n-u_m)=0$, and
\item $\displaystyle \lim_{n\to\infty} \fa(u_n,v) = \scalar{y}{j(v)}_H$ for all $v\in D(\fa)$.
\end{romanenum}

This operator $A$ is well-defined (i.e.\ $y$ is uniquely determined by $x$ and independent of the specific choice of the sequence $(u_n)$ with the above properties). Moreover, $-A$ generates a quasi-contractive holomorphic $C_0$-semigroup $(S(t))_{t\ge 0}$ on $H$.
We call $A$ the \emphdef{operator associated with} $(\fa,j)$ and write $A\sim (\fa,j)$.
Similarly, $(S(t))_{t\ge 0}$ is called the \emphdef{semigroup associated with} $(\fa,j)$ and we write $(S(t))_{t\ge 0}\sim (\fa,j)$.
We refer to~\cite[Section~3 and specifically Theorem~3.2]{AtE12:sect-form} for these and related results.

Let $(\fa,j)$ be a sectorial form in $H$. Suppose that $A\sim (\fa,j)$  and $(S(t))_{t\ge 0}\sim (\fa,j)$.
The following criterion for the invariance of a closed, convex subset $C$ of $H$ under the associated semigroup is very convenient. Denote by $P\colon H\to C$ the orthogonal projection onto $C$.

\begin{prop}\label{prop:suff-inv}
Assume that for all $u\in D(\fa)$ there exists a $w\in D(\fa)$ such that $Pj(u)=j(w)$ and $\Re\fa(w,u-w)\ge 0$.
Then $S(t)C\subset C$ for all $t\ge 0$,
and there exists a $\lambda_0\ge 0$ such that $\lambda R(\lambda,-A)C\subset C$ for all $\lambda>\lambda_0$.
\end{prop}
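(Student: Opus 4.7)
First I would reduce semigroup invariance to resolvent invariance via the Euler exponential formula
\[
    S(t) x \;=\; \lim_{n\to\infty} \Bigl( \tfrac{n}{t}\, R(\tfrac{n}{t}, -A) \Bigr)^n x.
\]
Once $\lambda R(\lambda, -A) C \subset C$ is established for all $\lambda$ exceeding some $\lambda_0$, closedness and convexity of $C$ force $S(t) C \subset C$ for every $t \ge 0$ by iterating the resolvent and passing to the limit. So the whole task is the resolvent inclusion, and I expect $\lambda_0 := \omega$ (the sectorial shift of $\fa$) to suffice.

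For the resolvent step I would work in the canonical completion of $(\fa, j)$ from~\cite{AtE12:sect-form}: there is a Hilbert space $V$ into which $D(\fa)$ embeds with dense image, and continuous extensions $\tilde\fa$ of $\fa$ and $\tilde j \colon V \to H$ of $j$, such that for $\lambda > \omega$ the form $\tilde\fa + \lambda (\tilde j(\cdot) \mid \tilde j(\cdot))_H$ is coercive on $V$. Lax--Milgram then yields, for each $f \in C$, a unique $\tilde u \in V$ solving
\[
    \tilde\fa(\tilde u, v) + \lambda (\tilde j(\tilde u) \mid \tilde j(v))_H \;=\; \lambda\, (f \mid \tilde j(v))_H \qquad (v \in V),
\]
and by construction $\tilde j(\tilde u) = \lambda R(\lambda, -A) f$. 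The aim becomes showing $\tilde j(\tilde u) \in C$.

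The pivotal intermediate step is to extend the hypothesis from $D(\fa)$ to $V$. Approximate $\tilde u$ by $(u_n) \subset D(\fa)$ with $u_n \to \tilde u$ in $V$, and apply the hypothesis to produce $w_n \in D(\fa)$ with $j(w_n) = P j(u_n)$ and $\Re \fa(w_n, u_n - w_n) \ge 0$. Sectoriality turns $\Re \fa(w_n, w_n) \le \Re \fa(w_n, u_n)$ into a uniform $V$-bound on $(w_n)$, so a subsequence converges weakly in $V$ to some $\tilde w$. Lipschitz continuity of $P$ combined with the strong convergence $j(u_n) \to \tilde j(\tilde u)$ in $H$ identifies $\tilde j(\tilde w) = P \tilde j(\tilde u)$, and weak lower semicontinuity of the convex functional $v \mapsto \Re \tilde\fa(v, v) + \omega\|\tilde j(v)\|_H^2$ yields the extended inequality $\Re \tilde\fa(\tilde w, \tilde u - \tilde w) \ge 0$.

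The conclusion is then a short calculation: testing the variational identity at $v = \tilde u - \tilde w$ and expanding around $\tilde u = \tilde w + (\tilde u - \tilde w)$, using the extended hypothesis, the sectorial bound $\Re \tilde\fa(\tilde u - \tilde w, \tilde u - \tilde w) \ge -\omega \|\tilde j(\tilde u - \tilde w)\|_H^2$, and the projection characterization $\Re (P\tilde j(\tilde u) \mid \tilde j(\tilde u) - P\tilde j(\tilde u))_H \ge \Re (f \mid \tilde j(\tilde u) - P\tilde j(\tilde u))_H$ (valid for $f \in C$), produces $(\lambda - \omega) \|\tilde j(\tilde u - \tilde w)\|_H^2 \le 0$; for $\lambda > \omega$ this forces $\tilde j(\tilde u) = P\tilde j(\tilde u) \in C$. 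I expect the main obstacle to be the third step: in the classical complete-form setting of Ouhabaz it is vacuous, but in the incomplete sectorial-form framework one must pass to weak limits in $V$, and the identification $\tilde j(\tilde w) = P\tilde j(\tilde u)$ depends crucially on the strong (not merely weak) convergence of $j(u_n)$ in $H$, since orthogonal projection onto a closed convex set is generally not weakly sequentially continuous.
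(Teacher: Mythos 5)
Your proposal is correct and follows essentially the same route as the paper, which proves this proposition by citing~\cite[Proposition~3.11]{AtE12:sect-form} and observing (after~\cite{MVV05}) that the accretivity hypothesis there can be dropped because Euler's exponential formula reduces semigroup invariance to resolvent invariance for large $\lambda$. Your argument is in effect a self-contained reconstruction of that cited proof --- the passage to the completion of $(\fa,j)$, the weak-limit extension of the hypothesis (correctly using the strong convergence of $j(u_n)$ to identify the projection), and the final coercivity computation with $\lambda>\omega$ --- so it adds detail the paper delegates to the reference but introduces no new idea or gap.
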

\begin{rem}
Proposition~\ref{prop:suff-inv} is an immediate consequence of~\cite[Proposition~3.11]{AtE12:sect-form}. We point out that the accretivity hypothesis made there and in~\cite[Proposition~2.9]{AtE12:sect-form} can be omitted. 
This was already observed in~\cite[Theorem~2.1]{MVV05}. In fact, due to Euler's exponential formula for the associated $C_0$-semigroup it suffices to show that $\lambda R(\lambda,-A)C\subset C$ for $\lambda>0$ sufficiently large in order to ensure invariance.
\end{rem}

We consider a special case. Assume that $H=L^2(X)$, where $(X,\Sigma,\mu)$ is a finite measure space.
Then $C_\infty = \{ f\in H : \abs{f(x)}\le 1\text{ a.e.}\}$ is a closed, convex subset of $H$ 
and the orthogonal projection $Q$ of $H$ onto $C_\infty$ is given by
\begin{equation}
    Qf = (\abs{f}\wedge \one_X)\sign f,
\end{equation}
where $\sign f = \one_{[f\ne 0]}\frac{f}{\abs{f}}$.
The set $C_\infty$ is invariant under $(S(t))_{t\ge 0}$ if and only if the semigroup is \emphdef{$L^\infty$-contractive}, i.e. 
\[
    \norm{S(t)f}_{L^\infty(X)}\le\norm{f}_{L^\infty(X)}
\]
for all $f\in L^\infty(X)$ and $t>0$.
In this case, we consider the \emphdef{part} of $A$ in $L^\infty(X)$, i.e.\ in the operator
\begin{align*}
    D(A_\infty) &= \{ u\in D(A)\cap L^\infty(X) : Au\in L^\infty(X)\},\\
    A_\infty u &= Au.
\end{align*}

By a result of Lotz~\cite[Corollary~4.3.19]{ABHN2011}, the operator $-A_\infty$ will generate a $C_0$-semigroup on $L^\infty(X)$ only if it is bounded. Still, we obtain the following as a consequence of Proposition~\ref{prop:suff-inv}.

\begin{cor}\label{cor:suff-inv}
Assume that for all $u\in D(\fa)$ there exists a $w\in D(\fa)$ such that $Qj(u)=j(w)$ and $\Re\fa(w,u-w)\ge 0$.
Then $-A_\infty$ is m-dissipative in $L^\infty(X)$, i.e.\ $(0,\infty)\in\rho(-A_\infty)$ and $\norm{\lambda R(\lambda,-A_\infty)}_{\Linop(L^\infty(X))}\le 1$ for all $\lambda>0$.
\end{cor}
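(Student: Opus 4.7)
The plan is to apply Proposition~\ref{prop:suff-inv} directly with $C:=C_\infty$ and $P:=Q$: the hypothesis of the corollary is exactly that of the proposition in this setting. This yields $S(t)C_\infty\subset C_\infty$ for all $t\ge 0$ together with a $\lambda_0\ge 0$ such that $\lambda R(\lambda,-A)C_\infty\subset C_\infty$ for all $\lambda>\lambda_0$.

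The first concrete step is to convert the set-invariance of the resolvent into an $L^\infty$-norm estimate. Since $\mu$ is finite, $L^\infty(X)$ embeds continuously into $H=L^2(X)$, so $R(\lambda,-A)f$ makes sense for $f\in L^\infty(X)$ whenever $\lambda\in\rho(-A)$. By homogeneity, the invariance above reads $\norm{\lambda R(\lambda,-A)f}_\infty\le\norm{f}_\infty$ for all $f\in L^\infty(X)$ and $\lambda>\lambda_0$. Writing $u:=R(\lambda,-A)f$, one has $u\in D(A)\cap L^\infty(X)$ and $Au=f-\lambda u\in L^\infty(X)$, so $u\in D(A_\infty)$ with $(\lambda+A_\infty)u=f$. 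This identifies the restriction of $R(\lambda,-A)$ to $L^\infty(X)$ with $R(\lambda,-A_\infty)$ and shows $(\lambda_0,\infty)\subset\rho(-A_\infty)$ with $\norm{\lambda R(\lambda,-A_\infty)}_{\Linop(L^\infty(X))}\le 1$ on this half-line.

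To extend $\rho(-A_\infty)$ to all of $(0,\infty)$, I would first observe that $A_\infty$ is closed in $L^\infty(X)$ (inherited from the closedness of $A$ in $L^2$ via the continuous embedding $L^\infty\hookrightarrow L^2$) and that $-A_\infty$ is dissipative (immediate from the contractive resolvent bound at any single $\lambda>\lambda_0$). Hence $\norm{R(\lambda,-A_\infty)}\le 1/\lambda$ for every $\lambda\in(0,\infty)\cap\rho(-A_\infty)$, and a single Neumann series step finishes the job: picking any $\lambda_1>\lambda_0$, the open disk of radius $\norm{R(\lambda_1,-A_\infty)}^{-1}\ge\lambda_1$ around $\lambda_1$ lies in $\rho(-A_\infty)$, so $(0,2\lambda_1)\subset\rho(-A_\infty)$, which combined with $(\lambda_0,\infty)\subset\rho(-A_\infty)$ yields $(0,\infty)\subset\rho(-A_\infty)$. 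The only real obstacle is the bookkeeping in the first step, identifying the $L^2$-resolvent restricted to $L^\infty$ with $R(\lambda,-A_\infty)$; the remaining arguments are purely operator-theoretic and standard.
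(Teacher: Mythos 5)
Your argument follows the same route as the paper: apply Proposition~\ref{prop:suff-inv} with $C=C_\infty$ and $P=Q$, identify $\restrict{R(\lambda,-A)}{L^\infty(X)}$ with $R(\lambda,-A_\infty)$ for $\lambda>\lambda_0$, and then extend the resolvent estimate from $(\lambda_0,\infty)$ to $(0,\infty)$; the paper delegates this last extension to the proof of~\cite[Lemma~3.4.2]{ABHN2011}, whereas you carry it out explicitly via dissipativity and a Neumann series, which is precisely the content of that lemma, so the substance is the same. One inaccuracy to repair: dissipativity of $-A_\infty$ is \emph{not} immediate from the contractive resolvent bound at a \emph{single} $\lambda_1>\lambda_0$ --- on $\CC$, multiplication by $\eps+iM$ satisfies $\abs{\lambda_1-\eps-iM}\ge\lambda_1$ for $M$ large, hence $\norm{\lambda_1 R(\lambda_1,\cdot)}\le 1$ there, yet it is not dissipative since $\eps>0$. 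What you actually need (and have) is the bound on the whole ray $(\lambda_0,\infty)$: for $x\in D(A_\infty)$, $0<\mu\le\lambda$ and $\lambda>\lambda_0$ the triangle inequality gives $\norm{(\mu+A_\infty)x}\ge\norm{(\lambda+A_\infty)x}-(\lambda-\mu)\norm{x}\ge\lambda\norm{x}-(\lambda-\mu)\norm{x}=\mu\norm{x}$, which is dissipativity on all of $(0,\infty)$. With that justification corrected, the rest of your argument (closedness, the Neumann series step giving $(0,2\lambda_1)\subset\rho(-A_\infty)$, and the resulting bound $\norm{\lambda R(\lambda,-A_\infty)}_{\Linop(L^\infty(X))}\le 1$ for all $\lambda>0$) is complete and correct.
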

\begin{proof}
By Proposition~\ref{prop:suff-inv} there exists $\lambda_0\ge 0$ such that $\lambda R(\lambda,-A)C_\infty\subset C_\infty$ for all $\lambda>\lambda_0$.
Thus $(\lambda_0,\infty)\subset\rho(-A_\infty)$ and $R(\lambda,-A_\infty) = \restrict{R(\lambda,-A)}{L^\infty(X)}$.
Now it follows from the proof of~\cite[Lemma~3.4.2]{ABHN2011} that $-A_\infty$ is m-dissipative.
\end{proof}

\section{The Wentzell Laplacian in \texorpdfstring{$L^2(\Omega)\oplus L^2(\Gamma)$}{L²(Ω)⊕L²(Γ)}}\label{sec:W-Lap-apptrace}

Let $\Omega\subset\RR^d$ be open and bounded, $\Gamma = \partial\Omega$. 
We assume that $\sigma$ is a fixed finite positive Borel measure on $\Gamma$
and we also simply write $L^2(\Gamma)$ instead of $L^2(\Gamma,\sigma)$.
We briefly comment on this specific setting.
\begin{rem}
\begin{parenum}
\item 
If $\Hm(\Gamma)<\infty$, the choice $\sigma=\Hm\niv\Gamma$ is very natural from a geometric point of view.
This is the case for a Lipschitz domain, but also allows to consider domains with inward and outward pointing cusps, for example.

\item
The setting here, however, also works for domains with fractional boundary parts, as long as a suitable finite Borel measure $\sigma$ on $\Gamma$ is available. For example, if $\Omega$ is in addition connected, then $\sigma$ can be chosen to be the harmonic measure with respect to any fixed base point in $\Omega$.

More specifically, if $\Omega\subset\RR^2$ is the Koch snowflake domain, the choice $\sigma = \Hm[s]\niv \Gamma$ for the Hausdorff dimension $s=\frac{\log 4}{\log 3}$ of the Koch curve is very natural, since then there exist constants $c_1,c_2>0$ such that
\[
    c_1 r^s \le \Hm[s](\Gamma\cap B(z,r)) \le c_2 r^s.
\]
for all $x\in \Gamma$ and $r\in(0,1]$ (i.e.~$\Gamma$ is an \emphdef{$s$-set}), see~\cite[Proposition~4.2]{Lan02}. Therefore $\Hm[s]\niv\Gamma$ is a finite Borel measure on $\Gamma$. The Koch snowflake domain is actually quite well-behaved since it has the Sobolev extension property and there is a bounded trace operator on $H^1(\Omega)$ with values in the Besov space $B^{2,2}_{s/2}(\Gamma)$, see~\cite[Propositions~2 and~3]{Wal91}.

\item 
We make the assumptions that $\Omega$ is bounded and $\sigma$ is finite only for simplicity. Using the same techniques as in~\cite{AW03} (enforcing Dirichlet conditions on the part of $\Gamma$ where $\sigma$ is not locally finite), throughout this section we could allow arbitrary open sets $\Omega$ and arbitrary positive Borel measures $\sigma$ on $\Gamma$. To this end we would need to suitably replace $C(\clos{\Omega})\cap H^1(\Omega)$ by 
\[
    \Bigl\{ u\in C(\clos{\Omega})\cap H^1(\Omega): \int_\Gamma \abs{u}^2\dx[\sigma]<\infty\Bigr\}.
\] 
The assumptions that we make here ensure that for $u\in C(\clos{\Omega})\cap H^1(\Omega)$ one has $\restrict{u}{\Gamma}\in L^2(\Gamma,\sigma)$ and that the set of these restrictions is dense in $L^2(\Gamma,\sigma)$, see~\cite[Theorem~7.8 and Proposition~7.9]{Fo99}.
\end{parenum}
\end{rem}

\begin{samepage}
\begin{definition}
Let $u\in H^1(\Omega)$.
\begin{alenum}
\item Let $\phi\in L^2(\Gamma,\sigma)$. We say that $\phi$ is an \emphdef{(approximative) trace} (with respect to $\sigma$) of $u$ if there exists a sequence $(u_n)_{n\in\NN}$ in $C(\clos{\Omega})\cap H^1(\Omega)$ such that $u_n\to u$ in $H^1(\Omega)$ and $\restrict{u_n}{\Gamma}\to \phi$ in $L^2(\Gamma,\sigma)$.

\item We set $\Tr(u) := \{ \phi\in L^2(\Gamma,\sigma): \text{$\phi$ is a trace of $u$}\}$.
\end{alenum}
\end{definition}
\end{samepage}
\begin{rem*}
Suppose that $\Hm(\Gamma)<\infty$ and $\sigma=\Hm\niv\Gamma$.
Let $u\in H^1(\Omega)$. Then it may happen that $\Tr(u)=\emptyset$, and even if $u$ has a trace it may not be unique. In fact, it is possible that $\Tr(0)$ is infinite, cf.~\cite[Examples~4.2 and~4.3]{AW03} or~\cite[Section~4]{AtE12:sect-form}. However, if $\Omega$ has continuous boundary in the sense of graphs or if $d=2$ and $\Omega$ is connected, then $\Tr(u)$ has at most one element, see~\cite[Theorem~4.11 and Corollary~5.4]{Sau2020}. 
\end{rem*}

Next, we define the normal derivative via Green's formula.
\begin{definition}
Let $u\in H^1(\Omega)$ be such that $\Delta u\in L^2(\Omega)$.
Let $h\in L^2(\Gamma,\sigma)$. We set $\partial_\nor u=h$ if
\begin{equation}
    \int_\Omega (\Delta u)\conj{v} + \int_\Omega \nabla u\cdot\conj{\nabla v} = \int_\Gamma h\conj{v}\dx[\sigma]
\end{equation}
for all $v\in C(\clos{\Omega})\cap H^1(\Omega)$ and call $\partial_\nor u$ the \emphdef{normal derivative} (with respect to $\sigma$) of $u$.
If $h$ can be chosen in $C(\Gamma)$, we write $\partial_\nor u\in C(\Gamma)$.
\end{definition}
\begin{rem*}
\begin{parenum}
\item 
If a normal derivative exists, it is uniquely determined in $L^2(\Gamma,\sigma)$ due to the density of $\{\restrict{v}{\Gamma} : v\in C(\clos{\Omega})\cap H^1(\Omega)\}$ in $L^2(\Gamma,\sigma)$. Of course, if $\sigma$ vanishes locally on parts of $\Gamma$, then the normal derivative might not be uniquely determined in $C(\Gamma)$.
\item
If $\Omega$ is Lipschitz and $\sigma=\Hm\niv\Gamma$, then this definition is motivated by the divergence theorem.
However, if $\Omega\subset\RR^2$ is the Koch snowflake domain and $\sigma=\Hm[s]\niv\Gamma$ for $s=\frac{\log 4}{\log 3}$, the above normal derivative represents the normal derivative given as a bounded linear functional on the Besov space $B^{2,2}_{s/2}(\Gamma)$ as in~\cite[Theorem~4.15]{Lan02} via integration in $L^2(\Gamma,\sigma)$.
\end{parenum}
\end{rem*}

One cannot realise the Laplacian with Wentzell boundary conditions as an $m$-sectorial operator in $L^2(\Omega)$.
If $\Delta u$ is merely in $L^2(\Omega)$, one cannot sensibly consider its boundary trace.
As a natural relaxation, however, one can decouple the boundary conditions. 
This can be accomplished by defining a realisation $\Delta^\rmW$ of the Wentzell Laplacian in $L^2(\Omega)\oplus L^2(\Gamma)$ in the following way.

\begin{thm}\label{thm:gen-Wentzell-H}
Suppose that $\Omega\subset\RR^d$ is open and bounded, and that its boundary $\Gamma$ is equipped with a finite positive Borel measure $\sigma$.
Let $\beta\in L^\infty(\Gamma)$ be a complex-valued function.
Define the operator $\Delta^\rmW$ in $L^2(\Omega)\oplus L^2(\Gamma)$ by
\begin{align*}
    D(\Delta^\rmW) &= \{ (u,\phi)\in u\in H^1(\Omega)\oplus L^2(\Gamma) : \Delta u\in L^2(\Omega),\ \phi\in\Tr(u),\ \partial_\nor u\in L^2(\Gamma)\},\\
    \Delta^\rmW (u, \phi) &= (\Delta u, -\partial_\nor u - \beta \phi).
\end{align*}
Then $\Delta^\rmW$ generates a holomorphic $C_0$-semigroup $(S(t))_{t\ge 0}$ on $L^2(\Omega)\oplus L^2(\Gamma)$.
\end{thm}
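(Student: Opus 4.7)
The plan is to apply the form machinery of Section~\ref{sec:fm-incompl} to a sectorial form $(\fa, j)$ in $H := L^2(\Omega) \oplus L^2(\Gamma)$ carefully tailored so that the associated operator $A \sim (\fa, j)$ coincides with $-\Delta^\rmW$. Concretely, I would set
\[
    D(\fa) := \{(u, \phi) \in H^1(\Omega) \oplus L^2(\Gamma) : \phi \in \Tr(u)\},
\]
\[
    \fa((u, \phi), (v, \psi)) := \int_\Omega \nabla u \cdot \conj{\nabla v} + \int_\Gamma \beta \phi \conj{\psi},
\]
and let $j \colon D(\fa) \to H$ be the inclusion. For any $\omega > \norm{\beta}_\infty$ one has $\Re(\fa(u,\phi) + \omega \norm{j(u,\phi)}_H^2) \ge \norm{\nabla u}_{L^2(\Omega)}^2$ and $\abs{\Im \fa(u,\phi)} \le \norm{\beta}_\infty \norm{\phi}_{L^2(\Gamma)}^2$, so the sector condition holds with some $\theta \in [0, \pi/2)$. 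Density of $j(D(\fa))$ in $H$ is immediate on the $L^2(\Omega)$-factor via $C_c^\infty(\Omega) \times \{0\} \subset D(\fa)$; for the $L^2(\Gamma)$-factor I would approximate $\phi \in C(\Gamma)$ in $L^2(\Gamma)$, extend it via Tietze to $\tilde\phi \in C(\RR^d)$, and mollify, using that $\tilde\phi * \rho_\eps \in C^\infty(\clos\Omega) \cap H^1(\Omega)$ converges uniformly on $\clos\Omega$ to $\tilde\phi$ and hence in both $L^2(\Omega)$ and $L^2(\Gamma)$.

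The heart of the proof is then identifying $A$ with $-\Delta^\rmW$. For the inclusion $D(\Delta^\rmW) \subseteq D(A)$, the constant sequence $(u_n, \phi_n) \equiv (u, \phi)$ trivially satisfies (i) and (ii), and to verify (iii) I would extend Green's formula from test functions $v \in C(\clos\Omega) \cap H^1(\Omega)$, where it holds by the very definition of $\partial_\nu u$, to arbitrary $(v, \psi) \in D(\fa)$ by approximation. Choosing $v_n \in C(\clos\Omega) \cap H^1(\Omega)$ with $v_n \to v$ in $H^1(\Omega)$ and $\restrict{v_n}{\Gamma} \to \psi$ in $L^2(\Gamma)$ and passing to the limit yields
\[
    \int_\Omega (\Delta u)\conj{v} + \int_\Omega \nabla u \cdot \conj{\nabla v} = \int_\Gamma (\partial_\nu u)\conj{\psi},
\]
whence $\fa((u,\phi), (v, \psi)) = \scalar{(-\Delta u,\, \partial_\nu u + \beta\phi)}{(v, \psi)}_H$, as required.

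For the reverse inclusion $D(A) \subseteq D(\Delta^\rmW)$, let $(u_0, \phi_0) \in D(A)$ with defining sequence $(u_n, \phi_n)$. Conditions (i) and (ii) combined with the coercivity force $\nabla u_n$ to be Cauchy in $L^2(\Omega)^d$, so $u_0 \in H^1(\Omega)$ and $u_n \to u_0$ in $H^1(\Omega)$, while $\phi_n \to \phi_0$ in $L^2(\Gamma)$ by (i). A diagonal argument on the sequences in $C(\clos\Omega) \cap H^1(\Omega)$ witnessing $\phi_n \in \Tr(u_n)$ then yields $\phi_0 \in \Tr(u_0)$. Testing (iii) against $(v, 0)$ for $v \in C_c^\infty(\Omega)$ identifies $\Delta u_0 \in L^2(\Omega)$ with $\Delta u_0 = -y_1$; testing it against $(v, \restrict{v}{\Gamma})$ for $v \in C(\clos\Omega) \cap H^1(\Omega)$ and rearranging gives the Green's-formula identity characterising $\partial_\nu u_0 = y_2 - \beta\phi_0 \in L^2(\Gamma)$. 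Hence $(u_0, \phi_0) \in D(\Delta^\rmW)$ with the expected action, and the conclusion of the theorem follows from Section~\ref{sec:fm-incompl}.

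The main obstacle I anticipate is precisely this two-sided identification of $A$ with $-\Delta^\rmW$: one has to bridge the ``test-pair'' formulation intrinsic to $\fa$, in which test objects carry their own approximative trace, and the ``boundary-value'' formulation defining $\Tr$ and $\partial_\nu u$, in which test functions are required to be continuous up to the boundary. This hinges on two features of the approximative trace, namely (a) the closedness of the graph $\{(u, \phi) : \phi \in \Tr(u)\}$ in $H^1(\Omega) \oplus L^2(\Gamma)$, which is built into the definition, and (b) the resulting fact that Green's formula extends by continuity from $v \in C(\clos\Omega) \cap H^1(\Omega)$ to all pairs $(v, \psi) \in D(\fa)$.
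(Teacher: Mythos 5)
Your proposal is correct, but it runs the form method through a genuinely different form than the paper does. The paper takes $D(\fa)=C(\clos{\Omega})\cap H^1(\Omega)$ with $j(u)=(u,\restrict{u}{\Gamma})$, so the form is honestly \emph{incomplete} and the operator must be identified through Cauchy approximating sequences in $C(\clos{\Omega})\cap H^1(\Omega)$; you instead take the closed subspace $V=\{(u,\phi):\phi\in\Tr(u)\}$ as form domain with $j$ the inclusion, which (after a shift) is a coercive, continuous, hence \emph{closed} form, so the machinery of Section~\ref{sec:fm-incompl} degenerates to the classical complete case and constant approximating sequences suffice for one inclusion. Your form is exactly the form $\fb$ that the paper introduces only in Section~\ref{sec:Wentzell-H}, where its association with $\lambda-\Delta^\rmW$ is dismissed as "readily verified"; your argument is essentially that verification, and the two pillars you isolate --- closedness of $V$ (via the diagonal argument) and the extension of Green's formula from continuous test functions to arbitrary pairs $(v,\psi)\in V$ using $\partial_\nu u\in L^2(\Gamma)$ --- are precisely what make it work. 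What the paper's smaller form domain buys is not visible in Theorem~\ref{thm:gen-Wentzell-H} itself but in Proposition~\ref{prop:W-Lap-contr}: there one needs the projection $\widetilde{Q}$ to map $j(D(\fa))$ into itself, which for continuous $u$ is immediate from Lemma~\ref{lem:ouh-Q-ineq} since $\restrict{(Qu)}{\Gamma}=Q_\Gamma(\restrict{u}{\Gamma})$, whereas with your domain $V$ one would have to show $(Qu,Q_\Gamma\phi)\in V$ for a possibly non-unique trace $\phi\in\Tr(u)$, which requires an extra approximation argument (continuity of $Q$ in $H^1$). Two small points to tighten: in the density step your mollification produces $(\restrict{\tilde\phi}{\Omega},\phi)$, not $(0,\phi)$, in the closure of $j(D(\fa))$, so you must subtract an element of $L^2(\Omega)\times\{0\}$ (already known to lie in that closed subspace) or use the paper's cutoff functions $\chi_n$ vanishing on large compacts; and the sector condition needs $\abs{\Im\fa}\le\tan\theta\cdot\Re(\fa+\omega\norm{\cdot}^2)$, which follows from your two displayed bounds only after noting $\Re(\fa+\omega\norm{\cdot}^2)\ge(\omega-\norm{\beta}_\infty)\norm{\phi}_{L^2(\Gamma)}^2$.
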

\begin{rem*}
We call $\Delta^\rmW$ the \emphdef{Wentzell Laplacian} in $L^2(\Omega)\oplus L^2(\Gamma)$ (with respect to $\sigma$ and $\beta$), even though in general it does not even make sense to ask whether $-\partial_\nor u - \beta\phi$ is a trace of $\Delta u$.
In Section~\ref{sec:Wentzell-C} we will obtain a realisation of the Wentzell Laplacian in $C(\clos{\Omega})$,
and in Section~\ref{sec:Wentzell-H} a realisation in a suitable space of $H^1(\Omega)$ functions with traces; both are in a suitable sense parts of the operator $\Delta^\rmW$ for which Wentzell boundary conditions are realised in the sense of traces.
\end{rem*}
\begin{proof}
Let $D(\fa)=C(\clos{\Omega})\cap H^1(\Omega)$ and define 
\[
    \fa(u,v) = \int_\Omega\nabla u\cdot\conj{\nabla v} + \int_\Gamma \beta u\conj{v}\dx[\sigma]
\]
for all $u,v\in D(\fa)$. We define $j\colon D(\fa)\to H$ by $j(u)=(u,\restrict{u}{\Gamma})$.

\begin{parenum}
\item We show that $j(D(\fa))$ is dense in $H=L^2(\Omega)\oplus L^2(\Gamma)$.

\vspace*{4pt}\noindent First step: Let $\Phi\in C^\infty(\RR^d)$. We show that $(0,\restrict{\Phi}{\Gamma})\in \clos{j(D(\fa))}$. Let $K_n\subset\Omega$ be compact for all $n\in\NN$ such that $\bigcup_{n\in\NN}K_n=\Omega$. For all $n\in\NN$ there exists $\chi_n\in C^\infty_\cpt(\RR^d)$ such that $0\le\chi_n\le 1$, $\chi_n\ident 1$ on $\Gamma$ and $\chi_n\ident 0$ on $K_n$. Thus $\chi_n\Phi=\Phi$ on $\Gamma$ and $\chi_n\Phi\to 0$ in $L^2(\Omega)$. This proves the claim.

\vspace*{4pt}\noindent Second step: It follows from the first step that $\{0\}\times L^2(\Gamma)\subset\clos{j(D(\fa))}$.

\vspace*{4pt}\noindent Third step: Let $(f,g)\in H$. Then $(0,g)\in\clos{j(D(\fa))}$ by the second step.
There exists a sequence $(u_n)_{n\in\NN}$ in $C^\infty_\cpt(\Omega)$ such that $u_n\to f$ in $L^2(\Omega)$. Thus $(f,0)\in\clos{j(D(\fa))}$. We conclude that $(f,g)=(f,0) + (0,g)\in\clos{j(D(\fa))}$.

\item It is easy to see that there exist $\omega\ge 0$ and $\theta\in[0,\frac{\pi}{2})$ such that $\fa(u,u)+\omega\norm{j(u)}_H^2\in\Sigma_\theta$ for all $u\in D(\fa)$.
Let $A\sim (\fa,j)$. We know from Section~\ref{sec:fm-incompl} that $-A$ generates a holomorphic $C_0$-semigroup $(S(t))_{t\ge 0}$ on $H$. We show that $A=-\Delta^\rmW$.

\item We show $A\subset -\Delta^\rmW$. Let $(u,\phi)\in D(A)$ with $A(u,\phi)=(f,g)\in H$.
By the definition of $A$ there exists $(u_n)_{n\in\NN}$ in $D(\fa)=C(\clos{\Omega})\cap H^1(\Omega)$ such that $j(u_n)=(u_n,\restrict{u_n}{\Gamma})\to (u,\phi)$ in $H$,
$\fa(u_n-u_m,u_n-u_m)\to 0$ as $n,m\to\infty$ and
\begin{equation}\label{eq:3.2-form-op-id}
    \lim_{n\to\infty}\fa(u_n,v) = \int_\Omega f\conj{v} + \int_\Gamma g\conj{v}\dx[\sigma]
\end{equation}
for all $v\in D(\fa)$. Thus $u_n\to u$ in $L^2(\Omega)$ and $\restrict{u_n}{\Gamma}\to\phi$ in $L^2(\Gamma)$. Moreover,
\[
    \int_\Omega\abs{\nabla (u_n-u_m)}^2 + \int_\Gamma\beta\abs{u_n-u_m}^2\dx[\sigma]\to 0
\]
as $n,m\to\infty$. Since $(\restrict{u_n}{\Gamma})_{n\in\NN}$ converges in $L^2(\Gamma)$, it follows that
\[
    \int_\Omega\abs{\nabla (u_n-u_m)}^2\to 0
\]
as $n,m\to\infty$. Thus $(u_n)_{n\in\NN}$ is a Cauchy sequence in $H^1(\Omega)$.
It follows that $u\in H^1(\Omega)$ and $\lim_{n\to\infty} u_n=u$ in $H^1(\Omega)$.
Consequently, $\phi\in\Tr(u)$. Letting $n\to\infty$ in~\eqref{eq:3.2-form-op-id} shows that
\begin{equation}\label{eq:3.3}
    \int_\Omega\nabla u\cdot\conj{\nabla v} + \int_\Gamma\beta \phi\conj{v}\dx[\sigma] = \int_\Omega f\conj{v} + \int_\Gamma g\conj{v}\dx[\sigma]
\end{equation}
for all $v\in C(\clos{\Omega})\cap H^1(\Omega)$. Taking $v\in C^\infty_\cpt(\Omega)$ we deduce that $-\Delta u = f$. Now~\eqref{eq:3.3} gives
\begin{equation}
    \int_\Omega(\Delta u)\conj{v} + \int_\Omega\nabla u\cdot\conj{\nabla v} = \int_\Gamma(g-\beta\phi)\conj{v}\dx[\sigma]
\end{equation}
for all $v\in C(\clos{\Omega})\cap H^1(\Omega)$. Thus $\partial_\nor u = g-\beta\phi$.
We have shown that $u\in H^1(\Omega)$, $-\Delta u = f$, $\phi\in\Tr(u)$ and $\partial_\nor u+\beta\phi = g$. Thus $(u,\phi)\in D(\Delta^\rmW)$ and $A(u,\phi) = (-\Delta u,\partial_\nor u + \beta\phi) = -\Delta^\rmW(u,\phi)$.

\item We show $-\Delta^\rmW\subset A$. Let $(u,\phi)\in D(\Delta^\rmW)$ and $-\Delta^\rmW(u,\phi)=(f,g)\in H$. 
Then $u\in H^1(\Omega)$, $-\Delta u = f$, $\phi\in\Tr(u)$ and $\partial_\nor u+\beta\phi = g$.
Since $\phi\in\Tr(u)$, there exists $(u_n)_{n\in\NN}$ in $C(\clos{\Omega})\cap H^1(\Omega)$ such that $u_n\to u$ in $H^1(\Omega)$ and $\restrict{u_n}{\Gamma}\to\phi$ in $L^2(\Omega)$. 
This implies that $j(u_n)\to (u,\phi)$ in $H$ and $\fa(u_n-u_m,u_n-u_m)\to 0$ as $n,m\to\infty$.
Moreover,
\[
    \fa(u_n,v) = \int_\Omega\nabla u_n\cdot\conj{\nabla v} + \int_\Gamma\beta u_n\conj{v}\dx[\sigma]
        \to \int_\Omega \nabla u\cdot\conj{\nabla v} + \int_\Gamma\beta \phi\conj{v}\dx[\sigma]
\]
for all $v\in D(\fa)$. Since $\int_\Omega (\Delta u)\conj{v} + \int_\Omega f\conj{v}=0$, it follows that
\begin{align*}
    \int_\Omega\nabla u\cdot\conj{\nabla v} + \int_\Gamma\beta\phi\conj{v}\dx[\sigma] &= \int_\Omega (\Delta u)\conj{v} + \int_\Omega \nabla u\cdot\conj{\nabla v} + \int_\Omega f\conj{v} + \int_\Gamma\beta\phi\conj{v}\dx[\sigma] \\
        &= \int_\Gamma(\partial_\nor u)\conj{v}\dx[\sigma] + \int_\Omega f\conj{v} + \int_\Gamma\beta \phi\conj{v}\dx[\sigma] 
    = \int_\Omega f\conj{v} + \int_\Gamma g\conj{v}\dx[\sigma]
\end{align*}
for all $v\in D(\fa)$.
Thus, by the definition of $A$, $(u,\phi)\in D(A)$ and $A(u,\phi)=(f,g)$.\qedhere
\end{parenum}
\end{proof}

Consider the orthogonal projection $Q$ from $L^2(\Omega)$ onto the set $C_\infty$ as in Section~\ref{sec:fm-incompl} for the choice $X=(\Omega,\mathcal{B}(\Omega),\abs{{}\cdot{}})$.
\begin{lem}\label{lem:ouh-Q-ineq}
If $u\in H^1(\Omega)$, then $Qu\in H^1(\Omega)$ and
\begin{equation}\label{eq:Q-ineq}
    \Re\int_\Omega\nabla Qu\cdot\conj{\nabla(u-Qu)}\ge 0.
\end{equation}
Moreover, $Qu\in C(\clos{\Omega})$ if $u\in C(\clos{\Omega})\cap H^1(\Omega)$.
\end{lem}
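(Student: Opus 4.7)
The plan is to realise $Q$ pointwise as composition with the metric projection $\psi\colon\CC\to\CC$ of $\CC$ onto its closed unit disk, i.e.\ $\psi(z)=z$ for $\abs{z}\le 1$ and $\psi(z)=z/\abs{z}$ for $\abs{z}>1$. This $\psi$ is $1$-Lipschitz, so, viewing $u$ as an $\RR^2$-valued Sobolev function and applying the chain rule for Lipschitz compositions with Sobolev maps (à la Marcus--Mizel), one obtains $Qu=\psi\circ u\in H^1(\Omega)$. When additionally $u\in C(\clos{\Omega})$, the continuity of $\psi$ gives $Qu\in C(\clos{\Omega})$ at once.

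For the inequality, set $E:=[\abs{u}>1]$. On $\Omega\setminus E$ one has $Qu=u$ almost everywhere, hence $\nabla(u-Qu)=0$ a.e.\ there and the integrand in~\eqref{eq:Q-ineq} vanishes. On $E$ the chain rule yields, writing $\phi:=1/\abs{u}\in(0,1)$,
\[
    \nabla Qu = \phi\,\nabla u - \frac{u\,\nabla\abs{u}}{\abs{u}^2},\qquad
    \nabla(u-Qu) = (1-\phi)\,\nabla u + \frac{u\,\nabla\abs{u}}{\abs{u}^2}.
\]
Expanding $\nabla Qu\cdot\conj{\nabla(u-Qu)}$, taking real parts, and repeatedly using the identity $\Re(\conj{u}\,\partial_i u)=\abs{u}\,\partial_i\abs{u}$ (which follows from $\nabla\abs{u}^2=2\Re(\conj{u}\,\nabla u)$), the cross terms collapse to
\[
    \Re\bigl(\nabla Qu\cdot\conj{\nabla(u-Qu)}\bigr) = \phi(1-\phi)\bigl(\abs{\nabla u}^2-\abs{\nabla\abs{u}}^2\bigr).
\]
This is non-negative, since $\phi(1-\phi)>0$ on $E$ and since Kato's inequality gives $\abs{\nabla\abs{u}}\le\abs{\nabla u}$ almost everywhere. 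Integrating over $\Omega$ then yields~\eqref{eq:Q-ineq}.

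The conceptual step is essentially the identification $Q=\psi(\cdot)$; the actual difficulty is purely computational. For real-valued $u$ the inequality is immediate because $\nabla Qu$ and $\nabla(u-Qu)$ then have disjoint supports, but for complex-valued $u$ the ``angular'' component of $\nabla u$ does not vanish on $E$, and one has to exploit the cancellation between the radial and angular parts of $\nabla u$ encoded by $\Re(\conj{u}\,\partial_i u)=\abs{u}\,\partial_i\abs{u}$. This is the main obstacle, but it is resolved by the direct algebraic computation above.
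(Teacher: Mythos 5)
Your proof is correct, but it takes a genuinely different route from the one the paper actually writes out. You give the hands-on computational argument: realise $Q$ as composition with the metric projection $\psi$ of $\CC$ onto the closed unit disk, invoke the Lipschitz chain rule to get $Qu=\psi\circ u\in H^1(\Omega)$ (and $Qu\in C(\clos{\Omega})$ for continuous $u$), and then verify the pointwise identity $\Re\bigl(\nabla Qu\cdot\conj{\nabla(u-Qu)}\bigr)=\phi(1-\phi)\bigl(\abs{\nabla u}^2-\abs{\nabla\abs{u}}^2\bigr)$ on $[\abs{u}>1]$ — I checked the algebra and it is right, the cross terms do collapse via $\Re(\conj{u}\,\partial_i u)=\abs{u}\,\partial_i\abs{u}$. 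This is essentially the proof of~\cite[Proposition~4.11]{Ouhabaz2005}, which the paper cites as one option; the paper's own proof (in the Appendix) is instead abstract: it notes that $Q$ is the orthogonal projection of $L^2(\Omega)$ onto the unit ball of $L^\infty(\Omega)$, shows that the Neumann Laplacian generates a submarkovian (hence $L^\infty$-contractive) semigroup using only the elementary real truncation $(\Re u)^+$ and $S(t)\one_\Omega=\one_\Omega$, and then invokes the fact that for $j$-elliptic forms the invariance criterion of Proposition~\ref{prop:suff-inv} is \emph{necessary} as well as sufficient (\cite[Proposition~2.9]{AtE12:sect-form}), which delivers both $Qu\in H^1(\Omega)$ and~\eqref{eq:Q-ineq} with no computation. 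Your version is self-contained and elementary but carries the technical burden of the vector-valued Lipschitz chain rule (in particular the locality facts that $\nabla(u-Qu)=0$ a.e.\ on $[\abs{u}\le 1]$ and that the smooth-region chain rule formula holds a.e.\ on $[\abs{u}>1]$ — both standard, but they are exactly the "technical" content the paper's abstract argument is designed to avoid); the paper's version is shorter but leans on the converse invariance criterion from the literature.
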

For the proof we refer to~\cite[Proposition~4.11]{Ouhabaz2005} or to the Appendix, where we give a less technical and short abstract argument.

Next, we show that the semigroup generated by $\Delta^\rmW -\omega_0$ is $L^\infty$-contractive if $\omega_0\ge0$ is sufficiently large.
To this end, we consider the disjoint union of measure spaces $(X,\mu)$ where $X=\Omega\sqcup\Gamma=\clos{\Omega}$ and $\mu$ is the sum of the $d$-dimensional Lebesgue measure on $\Omega$ and $\sigma$ on $\Gamma$. We then identify $L^2(\Omega)\oplus L^2(\Gamma)$ with $L^2(X,\mu)$, as well as $L^\infty(\Omega)\oplus L^\infty(\Gamma)$ with $L^\infty(X,\mu)$.
We prove that Proposition~\ref{prop:suff-inv} can be applied to a shifted version of the form $\fa$.

\begin{prop}\label{prop:W-Lap-contr}
Adopt the notation and assumptions of Theorem~\ref{thm:gen-Wentzell-H}.
There exists an $\omega_0\ge0$ such that the $C_0$-semigroup generated by $\Delta^\rmW -\omega_0$ is $L^\infty$-contractive.
\end{prop}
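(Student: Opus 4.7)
The plan is to apply Proposition~\ref{prop:suff-inv} to a shifted form whose associated semigroup is precisely the one generated by $\Delta^\rmW-\omega_0$. Concretely, for $\omega_0\ge 0$ consider
\[
    \fa_{\omega_0}(u,v) = \fa(u,v) + \omega_0\scalar{j(u)}{j(v)}_H = \int_\Omega\nabla u\cdot\conj{\nabla v} + \int_\Gamma\beta u\conj{v} + \omega_0\int_\Omega u\conj{v} + \omega_0\int_\Gamma u\conj{v},
\]
on $D(\fa)=C(\clos{\Omega})\cap H^1(\Omega)$ with the same $j$. Its associated operator is $-\Delta^\rmW+\omega_0$, and its associated semigroup is $(e^{-\omega_0 t}S(t))_{t\ge 0}$, i.e.\ the one generated by $\Delta^\rmW-\omega_0$. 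Thus the task reduces to showing that for $\omega_0$ large enough the semigroup associated with $(\fa_{\omega_0},j)$ leaves $C_\infty\subset L^2(X)=L^2(\Omega)\oplus L^2(\Gamma)$ invariant, where $X=\Omega\sqcup\Gamma$ as in the discussion preceding Corollary~\ref{cor:suff-inv}.

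Given $u\in D(\fa)$ I would choose $w=Qu$ with $Qu=(\abs{u}\wedge\one)\sign u$. By Lemma~\ref{lem:ouh-Q-ineq} we have $w\in C(\clos{\Omega})\cap H^1(\Omega)=D(\fa)$. Since the truncation is given by the pointwise formula and $Qu$ is continuous up to the boundary, $\restrict{(Qu)}{\Gamma}=Q(\restrict{u}{\Gamma})$ pointwise on $\Gamma$; therefore $j(w)=(Qu,\restrict{Qu}{\Gamma})=Q(u,\restrict{u}{\Gamma})=Qj(u)$, verifying the first condition in Proposition~\ref{prop:suff-inv}.

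To verify the inequality, I would split $\fa_{\omega_0}(w,u-w)$ into its three contributions. The gradient term $\Re\int_\Omega\nabla Qu\cdot\conj{\nabla(u-Qu)}\ge 0$ by Lemma~\ref{lem:ouh-Q-ineq}. The crucial elementary observation for the other pieces is that pointwise
\[
    Qu\,\conj{(u-Qu)} = (\abs{u}\wedge 1)(\abs{u}-1)^+ \ge 0
\]
(which vanishes on $\{\abs{u}\le 1\}$ and equals $\abs{u}-1$ on $\{\abs{u}>1\}$), so this product is a nonnegative real number both on $\Omega$ and on $\Gamma$. Hence $\omega_0\int_\Omega Qu\,\conj{(u-Qu)}\ge 0$, and on the boundary
\[
    \Re\int_\Gamma (\beta+\omega_0)\,Qu\,\conj{(u-Qu)} = \int_\Gamma (\Re\beta+\omega_0)\,Qu\,\conj{(u-Qu)} \ge 0
\]
provided $\omega_0\ge\norm{\beta}_{L^\infty(\Gamma)}$. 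Summing the three contributions gives $\Re\fa_{\omega_0}(w,u-w)\ge 0$, and Proposition~\ref{prop:suff-inv} yields the invariance of $C_\infty$, i.e.\ $L^\infty$-contractivity of the semigroup generated by $\Delta^\rmW-\omega_0$.

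The only conceptual point that requires care is the identification $\restrict{(Qu)}{\Gamma}=Q(\restrict{u}{\Gamma})$, which is where continuity of $Qu$ on $\clos{\Omega}$ from Lemma~\ref{lem:ouh-Q-ineq} is essential; everything else is a direct combination of Lemma~\ref{lem:ouh-Q-ineq} with the pointwise nonnegativity of $Qu\,\conj{(u-Qu)}$, the latter absorbing the possibly complex boundary potential $\beta$ at the price of the shift $\omega_0$.
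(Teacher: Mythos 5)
Your proposal is correct and follows essentially the same route as the paper's proof: shift the form by $\omega_0$, apply Proposition~\ref{prop:suff-inv} with $w=Qu$, use Lemma~\ref{lem:ouh-Q-ineq} for the gradient term and membership $Qu\in D(\fa)$, and exploit the pointwise identity $Qu\,\conj{(u-Qu)}=(\abs{u}-1)^+\ge 0$ to absorb the complex potential $\beta$ after the shift. The only cosmetic difference is that you require $\omega_0\ge\norm{\beta}_{L^\infty(\Gamma)}$ where the paper only needs $\omega_0+\Re\beta\ge0$.
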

\begin{proof}
We have $-\Delta^\rmW\sim (\fa,j)$, where $D(\fa)=C(\clos{\Omega})\cap H^1(\Omega)$, 
\[
    \fa(u,v)=\int_\Omega\nabla u\cdot\conj{\nabla v} + \int_\Gamma\beta u\conj{v}\dx[\sigma],
\]
and $j\colon D(\fa)\to L^2(\Omega)\oplus L^2(\Gamma)$ is given by $j(u)=(u,\restrict{u}{\Gamma})$; see the proof of Theorem~\ref{thm:gen-Wentzell-H}.
Thus $-\Delta^\rmW+\omega_0$ is associated with $(\fa_{\omega_0},j)$, where $D(\fa_{\omega_0})=D(\fa)$ and
\begin{equation}\label{eq:3.6}
    \fa_{\omega_0}(u,v) = \fa(u,v)+\omega_0\Bigl(\int_\Omega u\conj{v} + \int_\Gamma u\conj{v}\dx[\sigma]\Bigr).
\end{equation}
Let 
\[
    \widetilde{C}_\infty := \{ (u,\phi)\in L^\infty(\Omega)\oplus L^\infty(\Gamma) : \norm{u}_{L^\infty(\Omega)}\le 1,\ \norm{\phi}_{L^\infty(\Gamma)}\le 1\}.
\]
Then the orthogonal projection $\widetilde{Q}$ of $L^2(\Omega)\oplus L^2(\Gamma)$ onto $\widetilde{C}_\infty$ is given by
\[
    \widetilde{Q}(u,\phi) = (Qu, Q_\Gamma\phi),
\]
where $Q_\Gamma\phi =  (\abs{\phi}\wedge \one_\Gamma)\sign\phi$.
Let $u\in D(\fa)$. The above shows that $\widetilde{Q} j(u) = j(Qu)$
since $Qu\in D(\fa)$ by Lemma~\ref{lem:ouh-Q-ineq}. 
So it suffices to show that $\Re\fa_{\omega_0}(Qu,u-Qu)\ge 0$. Then $L^\infty$-contractivity follows from Proposition~\ref{prop:suff-inv}.

Note that 
\[
    u-Qu = (\abs{u}-\abs{u}\wedge\one_\Omega)\sign u = (\abs{u}-\one_\Omega)^+\sign u.
\]
Thus $Qu\conj{(u-Qu)} = (\abs{u}-\one_\Omega)^+$. Similarly
\[
   Q_\Gamma \phi \conj{(\phi-Q_\Gamma\phi)} = (\abs{\phi}-\one_\Gamma)^+. 
\]
Therefore by Lemma~\ref{lem:ouh-Q-ineq} one has
\begin{align*}
    \Re\fa_{\omega_0}(Qu,u-Qu) &\ge \Re\int_\Omega\omega_0Qu\conj{(u-Qu)} + \Re\int_\Gamma(\omega_0+\beta)Q_\Gamma\phi\conj{(\phi-Q_\Gamma\phi)}\dx[\sigma] \\
    &= \int_\Omega \omega_0(\abs{u}-\one_\Omega)^+ + \int_\Gamma(\omega_0+\Re\beta)(\abs{\phi}-\one_\Gamma)^+\dx[\sigma]\ge 0
\end{align*}
if $\omega_0\ge0$ is so large that $\omega_0+\Re\beta\ge 0$.
\end{proof}

\section{The Wentzell Laplacian in \texorpdfstring{$C(\clos{\Omega})$}{C(Ω̅)}}\label{sec:Wentzell-C}

Throughout this section we suppose that $\Omega\subset\RR^d$ is a bounded open set with Lipschitz boundary. Moreover, we equip $\Gamma=\partial\Omega$ with the Borel measure $\Hm\niv\Gamma$. Note that $\Hm(\Gamma)<\infty$.

Then for each $u\in H^1(\Omega)$ there exists a unique $u_\Gamma\in L^2(\Gamma)$ such that $\Tr(u) = \{u_\Gamma\}$.
Moreover, the mapping $u\mapsto u_\Gamma\colon H^1(\Omega)\to L^2(\Gamma)$ is continuous.
The space $C(\clos{\Omega})\cap H^1(\Omega)$ is dense in $H^1(\Omega)$ and $u_\Gamma=\restrict{u}{\Gamma}$ for all $u\in C(\clos{\Omega})\cap H^1(\Omega)$. As before $\beta\in L^\infty(\Gamma)$ is allowed to be complex-valued.

Our aim is to prove the following.
\begin{thm}\label{thm:Wentzell-C}
Suppose that $\Omega\subset\RR^d$ is a Lipschitz domain and that its boundary $\Gamma$ is equipped with the measure $\Hm\niv\Gamma$. Let $\beta\in L^\infty(\Gamma)$ be a complex-valued function.
Define the operator $\Delta^\rmW_\rmC$ on $C(\clos{\Omega})$ by
\begin{align*}
    D(\Delta^\rmW_\rmC) &= \{ u\in C(\clos{\Omega})\cap H^1(\Omega) : \Delta u\in C(\clos{\Omega}),\ \partial_\nor u +\beta \restrict{u}{\Gamma} + \restrict{(\Delta u)}{\Gamma} = 0\},\\
    \Delta^\rmW_\rmC u &= \Delta u.
\end{align*}
Then $\Delta^\rmW_\rmC$ generates a $C_0$-semigroup on $C(\clos{\Omega})$.
\end{thm}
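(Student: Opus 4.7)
My plan is to realise $\Delta^\rmW_\rmC$ as the generator of the restriction of the $L^2(\Omega)\oplus L^2(\Gamma)$-semigroup $(S(t))_{t\ge0}$ from Theorem~\ref{thm:gen-Wentzell-H} to the closed subspace
\[
    F := \{(u,\restrict{u}{\Gamma}) : u\in C(\clos{\Omega})\}\subset L^\infty(\Omega)\oplus L^\infty(\Gamma).
\]
Since $\Omega$ is Lipschitz, the trace is unique and continuous on $H^1(\Omega)$, so $u\mapsto(u,\restrict{u}{\Gamma})$ is an isometric isomorphism $C(\clos{\Omega})\to F$ when $F$ is endowed with the supremum norm. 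Up to the rescaling of Proposition~\ref{prop:W-Lap-contr}, three things remain to be proven: (i)~$F$ is invariant under $e^{-\omega_0 t}S(t)$, (ii)~the restriction is strongly continuous, and (iii)~its generator, identified via the isomorphism, coincides with $\Delta^\rmW_\rmC-\omega_0 I$.

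\textbf{Invariance and identification of the generator.} By Euler's exponential formula, (i) reduces to $R(\lambda,-\Delta^\rmW+\omega_0)F\subset F$ for sufficiently large $\lambda>0$. Fix $f\in C(\clos{\Omega})$ and set $(u,\phi):=R(\lambda+\omega_0,-\Delta^\rmW)(f,\restrict{f}{\Gamma})$. Uniqueness of the trace on a Lipschitz domain forces $\phi=u_\Gamma$, and Theorem~\ref{thm:gen-Wentzell-H} shows that $u\in H^1(\Omega)$ solves the elliptic Wentzell--Robin problem
\[
    (\lambda+\omega_0)u-\Delta u = f\text{ in }\Omega,\qquad \partial_\nu u+(\beta+\lambda+\omega_0)u_\Gamma = (\lambda+\omega_0)\restrict{f}{\Gamma}\text{ on }\Gamma.
\]
Proposition~\ref{prop:W-Lap-contr} already yields $u\in L^\infty(\Omega)$ a priori. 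To upgrade this to $u\in C(\clos{\Omega})$ I invoke the elliptic regularity theory of Nittka~\cite{Nittka2010:thesis,Nit11} for Robin-type boundary-value problems on Lipschitz domains. While Nittka works with real coefficients, his arguments---combining interior Moser-type bounds with a Wiener-type boundary regularity criterion---depend only on coercivity and are insensitive to the imaginary part of the Robin coefficient, so they apply mutatis mutandis for complex $\beta\in L^\infty(\Gamma)$ after shifting $\omega_0$ further if necessary. Granted invariance, the generator of the restricted semigroup is the part of $-\Delta^\rmW+\omega_0 I$ in $F$; unravelling $\Delta^\rmW(u,\restrict{u}{\Gamma})=(\Delta u,-\partial_\nu u-\beta\restrict{u}{\Gamma})\in F$ forces $\Delta u\in C(\clos{\Omega})$ together with the Wentzell relation $\partial_\nu u+\beta\restrict{u}{\Gamma}+\restrict{(\Delta u)}{\Gamma}=0$, which is exactly the definition of $\Delta^\rmW_\rmC$.

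\textbf{Strong continuity and main obstacle.} Write $T(t):=\restrict{e^{-\omega_0 t}S(t)}{F}$; by the previous step these are $L^\infty$-contractions on $F$ satisfying the semigroup law. Following Nittka~\cite[Section~5.2]{Nittka2010:thesis} and~\cite[Section~4.2]{Nit11}, the restricted resolvent $R_C(\lambda)$ satisfies the resolvent identity with $\norm{R_C(\lambda)}_{\Linop(F)}\le 1/\lambda$, and its range is $D(\Delta^\rmW_\rmC)$. Using once more the continuity supplied by Step~1 (applied to a dense family of right-hand sides) one shows $\lambda R_C(\lambda)f\to f$ in the sup norm as $\lambda\to\infty$ for all $f\in F$, giving density of $D(\Delta^\rmW_\rmC)$ in $F$. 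For $u\in D(\Delta^\rmW_\rmC)$, invariance of $F$ and $L^\infty$-contractivity make the identity $T(t)u-u=\int_0^t T(s)(\Delta^\rmW_\rmC-\omega_0)u\dx[s]$ valid in the sup norm, so $\norm{T(t)u-u}_\infty\le t\norm{(\Delta^\rmW_\rmC-\omega_0)u}_\infty\to 0$; strong continuity on all of $F$ then follows from density and the uniform bound. The main obstacle throughout is the $C(\clos{\Omega})$-regularity of the elliptic Wentzell--Robin problem in Step~1: everything hinges on adapting Nittka's regularity theorem to complex $\beta$, the remaining arguments being fairly routine once invariance and $L^\infty$-contractivity are in hand.
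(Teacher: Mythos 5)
Your overall architecture is the same as the paper's: restrict the $L^2(\Omega)\oplus L^2(\Gamma)$ semigroup to $F=\{(u,\restrict{u}{\Gamma}):u\in C(\clos{\Omega})\}$, obtain invariance of $F$ from the resolvent via the $C(\clos{\Omega})$-regularity of the Robin problem (the paper's Proposition~\ref{prop:robin-reg}, whose extension to complex $\beta$ you treat in essentially the same way), and identify the part of the operator in $F$ with $\Delta^\rmW_\rmC-\omega_0$. That part of your argument is sound. The difference in the last step (you prove strong continuity directly via the integral identity and $L^\infty$-contractivity, the paper invokes m-dissipativity plus density, i.e.\ Lumer--Phillips) is cosmetic.

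The genuine gap is the density of $D(\Delta^\rmW_\rmC)$, which you dispose of in one clause: ``using once more the continuity supplied by Step~1 (applied to a dense family of right-hand sides) one shows $\lambda R_C(\lambda)f\to f$ in the sup norm''. The qualitative regularity statement $u\in C(\clos{\Omega})$ does not yield this convergence. The standard route $\lambda R(\lambda)f-f=R(\lambda)A_Ff$ requires $f\in D(A_F)$, which is circular; and there is no obvious dense subset of $D(\Delta^\rmW_\rmC)$ to start from, because a generic $w\in C^\infty(\clos{\Omega})$ does \emph{not} satisfy $\partial_\nu w+\beta\restrict{w}{\Gamma}+\restrict{(\Delta w)}{\Gamma}=0$ --- indeed $\beta\restrict{w}{\Gamma}$ need not even be continuous since $\beta$ is only in $L^\infty(\Gamma)$, which is precisely the point the paper flags before Lemma~\ref{lem:C-densedef}. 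What is actually needed is the \emph{quantitative} a priori bound of Proposition~\ref{prop:robin-bdd}, $\norm{u}_{L^\infty(\Omega)}\le c(\norm{f}_{L^p(\Omega)}+\norm{g}_{L^q(\Gamma)})$ with $p>\tfrac d2$, $q>d-1$: one approximates the (discontinuous) formal data of $w$ by restrictions of smooth functions merely in $L^p(\Omega)\oplus L^q(\Gamma)$, solves the Robin problem with that smooth data to get $u\in D(\Delta^\rmW_\rmC)$, and reads off $\norm{u-w}_{C(\clos{\Omega})}\le c\eps$ from the estimate applied to the difference. Your proposal never invokes this estimate (you only use $L^\infty$-contractivity, which gives boundedness but no smallness), so the density step --- and with it strong continuity at $t=0$ on all of $F$ --- is not established. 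You have also somewhat misplaced the ``main obstacle'': the adaptation of Nittka's regularity to complex $\beta$ is a short reduction, whereas the density argument is where the Stampacchia-type estimate is genuinely indispensable.
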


We need in an essential way both an $L^\infty$-estimate and continuity up to the boundary for solutions $u$ of the elliptic equation
\begin{equation}\label{eq:robin-reg}
    \left\{\begin{aligned}
    \lambda u - \Delta u &= f, \\
    \partial_\nor u + (\lambda+\beta)u_\Gamma &= g,
    \end{aligned}\right.
\end{equation}
where $f\in L^p(\Omega)$ and $g\in L^q(\Gamma)$ for appropriate $p,q$ are given and $\lambda>0$ is sufficiently large.

For the following we choose $\omega_0\ge 0$ so large that Proposition~\ref{prop:W-Lap-contr} holds and
the form $\fa_{\omega_0}\colon H^1(\Omega)\times H^1(\Omega)\to\CC$ given by
\[
    \fa_{\omega_0}(u,v) = \int_{\Omega}\nabla u\cdot\conj{\nabla v} + \int_\Gamma\beta u_\Gamma\conj{v}_\Gamma + \omega_0\Bigl(\int_\Omega u\conj{v} + \int_\Gamma u_\Gamma\conj{v}_\Gamma\Bigr)
\]
is coercive. So there exists an $\alpha>0$ such that
\begin{equation}
    \Re \fa_{\omega_0}(v,v)\ge \alpha\norm{v}_{H^1(\Omega)}^2 
\end{equation}
for all $v\in H^1(\Omega)$.
Suppose that $f\in L^p(\Omega)$ and $g\in L^q(\Gamma)$, where $p>\frac{d}{2}$ and $q>d-1$.
Then $F\colon v\mapsto\int_\Omega f\conj{v} + \int_\Gamma g\conj{v}_\Gamma$ is a continuous anti-linear functional on $H^1(\Omega)$. In fact, by the Sobolev embedding theorem $H^1(\Omega)\hookrightarrow L^{2^*}(\Omega)$, where $2^*=\frac{2d}{2-d}$ if $d>2$ and for all $2^*\in[1,\infty)$ if $d=2$. 
Moreover, the trace maps $H^1(\Omega)$ continuously into $L^s(\Gamma)$, where $s=\frac{2(d-1)}{d-2}$ if $d>2$ and for all $s\in[1,\infty)$ if $d=2$; see~\cite[Theorems~4.2 and~4.6]{Nec12}. 
Hence, after $s$ is fixed suitably, there is a constant $c_1>0$ such that
\begin{equation}\label{eq:trace-Ls-est}
    \norm{u_\Gamma}_{L^s(\Gamma)} \le c_1\norm{u}_{H^1(\Omega)}
\end{equation}
for all $u\in H^1(\Omega)$. 
It follows from Hölder's inequality that $F$ is continuous.
So there exists a unique solution $u\in H^1(\Omega)$ of~\eqref{eq:robin-reg} for $\lambda=\omega_0$, i.e.\ $u$ satisfies
\[
    \fa_{\omega_0}(u,v) = F(v) = \int_\Omega f\conj{v} + \int_\Gamma g\conj{v}_\Gamma
\]
for all $v\in H^1(\Omega)$.

We start with the $L^\infty$-estimate in Proposition~\ref{prop:robin-bdd}.
For real $\beta\ge0$ it was proved by Warma in~\cite[Theorem~2.2]{War06}. The argument remains valid for complex $\beta$.
We reproduce Warma's proof with slight adjustments in order to make the exposition more self-contained and since the proof uses a neat argument to simultaneously control the behaviour in the inside and on the boundary.
The following stopping lemma by Stampacchia is an analytical tool in the proof. For the reader's convenience, we provide the proof.
\begin{lem}[{\cite[Lemma~4.1\,(i)]{Sta64}}]\label{lem:Sta-stop-lem}
Let $\phi\colon[0,\infty)\to[0,\infty)$ be monotonically non-increasing such that for some $\alpha, c_\phi>0$ and $\delta>1$ one has
\begin{equation}\label{eq:Sta-stop-ineq}
    \phi(h)\le c_\phi(h-k)^{-\alpha}\phi(k)^\delta
\end{equation}
for all $0\le k < h$.
Then $\phi(h)=0$ for all $h\ge t_0 := (c_\phi)^{1/\alpha}\phi(0)^{(\delta-1)/\alpha} 2^{\delta/(\delta-1)}$.
\end{lem}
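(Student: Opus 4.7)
The plan is to run an iterative geometric argument: construct an increasing sequence $k_n \nearrow t_0$ and show by induction that $\phi(k_n)$ decays geometrically, so that $\phi(t_0)=0$ and monotonicity finishes the job.

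Concretely, I would set $k_n := t_0(1-2^{-n})$ for $n\in\NN_0$, so that $k_0 = 0$, $k_n \to t_0$, and $k_{n+1}-k_n = t_0\, 2^{-(n+1)}$. Applying~\eqref{eq:Sta-stop-ineq} with $h=k_{n+1}$ and $k=k_n$ gives the recursion
\[
    \phi(k_{n+1}) \le c_\phi\, t_0^{-\alpha}\, 2^{\alpha(n+1)}\, \phi(k_n)^\delta.
\]
The natural ansatz is $\phi(k_n)\le \phi(0)\mu^{-n}$ for some $\mu>1$ to be chosen. Substituting the ansatz into the recursion, the induction step closes provided
\[
    c_\phi\, t_0^{-\alpha}\, \phi(0)^{\delta-1}\, 2^{\alpha(n+1)}\, \mu^{-\delta n + n + 1} \le 1
\]
for all $n$. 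The term $2^{\alpha n}\mu^{-(\delta-1)n}$ is bounded (in $n$) precisely when $\mu^{\delta-1} \ge 2^\alpha$, and the sharpest choice is $\mu = 2^{\alpha/(\delta-1)}$; with that value the $n$-dependence cancels, and the residual requirement becomes $t_0^\alpha \ge c_\phi\, \phi(0)^{\delta-1}\, 2^\alpha\, \mu$. Solving for $t_0$ yields exactly $t_0 = c_\phi^{1/\alpha}\,\phi(0)^{(\delta-1)/\alpha}\,2^{\delta/(\delta-1)}$ as in the statement.

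Once the inductive bound is established, $\phi(k_n) \to 0$. By monotonicity, for any $h \ge t_0$ and any $n$ one has $h \ge t_0 > k_n$, hence $\phi(h) \le \phi(k_n)$, so $\phi(h)=0$.

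The only delicate point is the calibration of $\mu$ and the threshold $t_0$ so that the inductive step closes uniformly in $n$; this is essentially an algebraic matching exercise, but it is exactly where the explicit constant $2^{\delta/(\delta-1)}$ arises, and the mild subtlety is that $k_n$ only approaches $t_0$ from below, so one uses monotonicity (rather than continuity) to pass to $h\ge t_0$.
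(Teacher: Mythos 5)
Your proposal is correct and follows essentially the same route as the paper's proof: the same dyadic sequence $k_n=(1-2^{-n})t_0$, the same inductive geometric bound (your ansatz $\phi(k_n)\le\phi(0)\mu^{-n}$ with $\mu=2^{\alpha/(\delta-1)}$ is exactly the paper's $\phi(k_m)\le 2^{m\alpha\gamma}\phi(0)$ with $\gamma=-1/(\delta-1)$), and the same final appeal to monotonicity. The calibration of $t_0$ works out identically, so there is nothing to add.
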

\begin{proof}
Let $\gamma=-\frac{1}{\delta-1}$. Then $1+\gamma\delta=\gamma$.
After replacing $\phi$ by $c_\phi^{-\gamma}\phi$, we may suppose that $c_\phi=1$.
Let $k_m = (1-2^{-m})t_0$ for all $m\in\NN_0$. Then by~\eqref{eq:Sta-stop-ineq} one has
\begin{equation}\label{eq:phi-km-est1}
    \phi(k_m)\le (k_m-k_{m-1})^{-\alpha}\phi(k_{m-1})^\delta = \frac{2^{m\alpha}}{t_0^\alpha} \phi(k_{m-1})^\delta
\end{equation}
for all $m\in\NN$. We prove by induction on $m$ that
\begin{equation}\label{eq:phi-km-est2}
    \phi(k_m) \le 2^{m\alpha\gamma}\phi(0)
\end{equation}
for all $m\in\NN_0$. This is trivial for $m=0$. If $m\in\NN$ is such that~\eqref{eq:phi-km-est2} holds for $m-1$, using~\eqref{eq:phi-km-est1} we obtain
\begin{align*}
    \phi(k_m) &\le \frac{2^{m\alpha}}{t_0^\alpha} \Bigl(2^{(m-1)\alpha\gamma}\phi(0)\Bigr)^\delta \\
        & = 2^{m\alpha+(m-1)\alpha\gamma\delta+\alpha\gamma\delta}\phi(0)\frac{\phi(0)^{\delta-1}2^{-\alpha\gamma\delta}}{t_0^\alpha} = 2^{m\alpha\gamma}\phi(0).
\end{align*}

As $\phi\ge0$ and $\alpha\gamma<0$, it follows from~\eqref{eq:phi-km-est2} that $\lim_{m\to\infty}\phi(k_m)=0$.
This implies $\phi(t_0)=0$ since $(k_m)$ monotonically increases to $t_0$ and $\phi$ is monotonically non-increasing.
\end{proof}

\begin{prop}\label{prop:robin-bdd}
Let $p>\frac{d}{2}$ and $q>d-1$. Let $f\in L^p(\Omega)$, $g\in L^q(\Gamma)$ and $\lambda\ge \omega_0$. Then the solution $u\in H^1(\Omega)$ of~\eqref{eq:robin-reg} is in $L^\infty(\Omega)$. 
Moreover, there exists a constant $c>0$ independent of $f$ and $g$ such that
\begin{equation}\label{eq:robin-bdd-est}
    \norm{u}_{L^\infty(\Omega)}\le c(\norm{f}_{L^p(\Omega)} + \norm{g}_{L^q(\Gamma)}).
\end{equation}
\end{prop}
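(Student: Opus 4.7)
My approach is Stampacchia's truncation method, fed by the Beurling--Deny--Ouhabaz style positivity of Lemma~\ref{lem:ouh-Q-ineq} on the form side and by Sobolev and trace embeddings on the data side. Let $\fa_\lambda$ denote the obvious analogue of $\fa_{\omega_0}$ with $\omega_0$ replaced by $\lambda$; the solution $u\in H^1(\Omega)$ of~\eqref{eq:robin-reg} then satisfies $\fa_\lambda(u,v)=\int_\Omega f\conj{v}+\int_\Gamma g\conj{v_\Gamma}$ for all $v\in H^1(\Omega)$. For $k>0$ introduce the truncation $Q_k u:=(\abs{u}\wedge k)\sign u$ and its superlevel part $v_k:=u-Q_k u=(\abs{u}-k)^+\sign u$, which lies in $H^1(\Omega)$ by Lemma~\ref{lem:ouh-Q-ineq} applied to $u/k$; note that $(Q_k u)_\Gamma=Q_k u_\Gamma$ by continuity of the trace. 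I would test the equation against $v_k$ and split $u=Q_k u+v_k$. The cross term $\Re\fa_\lambda(Q_k u,v_k)$ is nonnegative: the gradient contribution by Lemma~\ref{lem:ouh-Q-ineq}; pointwise $Q_k u\,\conj{v_k}=k(\abs{u}-k)\ge 0$ on $A(k):=\{\abs{u}>k\}$ and zero elsewhere, with the analogous identity on $B(k):=\{\abs{u_\Gamma}>k\}$ whose $\beta$-contribution is controlled by $\lambda+\Re\beta\ge 0$. Coercivity of $\fa_{\omega_0}$, transferred to $\fa_\lambda$ for $\lambda\ge\omega_0$, then yields
\[
    \alpha\norm{v_k}_{H^1(\Omega)}^2 \le \Re\fa_\lambda(v_k,v_k) \le \Re\fa_\lambda(u,v_k) = \Re\int_\Omega f\conj{v_k} + \Re\int_\Gamma g\conj{(v_k)_\Gamma}.
\]

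To bound the right-hand side I combine Hölder with the Sobolev embedding $H^1(\Omega)\hookrightarrow L^{2^*}(\Omega)$ and~\eqref{eq:trace-Ls-est}:
\[
    \int_{A(k)}\abs{f}\abs{v_k} \le \norm{f}_{L^p(\Omega)}\abs{A(k)}^{a_1}\norm{v_k}_{L^{2^*}(\Omega)} \le C\norm{f}_{L^p(\Omega)}\abs{A(k)}^{a_1}\norm{v_k}_{H^1(\Omega)},
\]
with $a_1:=1/p'-1/2^*$; crucially, $p>d/2$ yields $a_1>0$ and in fact $2^*a_1>1$. The boundary integral is handled by the parallel bound with $a_2:=1/q'-1/s$ satisfying $a_2>0$ and $sa_2>1$, thanks to $q>d-1$. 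Dividing by $\norm{v_k}_{H^1(\Omega)}$ produces
\[
    \norm{v_k}_{H^1(\Omega)}\le C_1\bigl(\norm{f}_{L^p(\Omega)}\abs{A(k)}^{a_1} + \norm{g}_{L^q(\Gamma)}\sigma(B(k))^{a_2}\bigr).
\]
A Chebyshev argument applied to $v_k$ and its trace yields $(h-k)\abs{A(h)}^{1/2^*}\le\norm{v_k}_{L^{2^*}(\Omega)}$ and $(h-k)\sigma(B(h))^{1/s}\le C\norm{v_k}_{H^1(\Omega)}$ for all $h>k$, both of which are at most $C\norm{v_k}_{H^1(\Omega)}$.

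Setting $\phi(k):=\abs{A(k)}^{1/2^*}+\sigma(B(k))^{1/s}$ and combining, for $h>k$ with $\phi(k)\le 1$ I obtain
\[
    \phi(h) \le C_2\bigl(\norm{f}_{L^p(\Omega)}+\norm{g}_{L^q(\Gamma)}\bigr)(h-k)^{-1}\phi(k)^\delta, \quad \delta:=\min(2^*a_1,\,sa_2)>1.
\]
A preliminary coercivity estimate $\norm{u}_{H^1(\Omega)}\le c(\norm{f}_{L^p(\Omega)}+\norm{g}_{L^q(\Gamma)})$, combined with Sobolev and~\eqref{eq:trace-Ls-est}, ensures $\phi(k)\le 1$ for all $k\ge k_*:=c'(\norm{f}_{L^p(\Omega)}+\norm{g}_{L^q(\Gamma)})$. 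Applying Lemma~\ref{lem:Sta-stop-lem} to the shifted function $k\mapsto\phi(k_*+k)$ produces $t_0\le C_3(\norm{f}_{L^p(\Omega)}+\norm{g}_{L^q(\Gamma)})$ with $\phi(k_*+t_0)=0$; in particular $\abs{A(k_*+t_0)}=0$, which gives~\eqref{eq:robin-bdd-est}. The main technical hurdle is assembling the interior and boundary superlevel contributions into a single scalar $\phi$ whose Stampacchia iteration has exponent $\delta>1$, which is precisely what the thresholds $p>d/2$ and $q>d-1$ make possible; tracking the linear dependence of $k_*$ and $t_0$ on the data then forces~\eqref{eq:robin-bdd-est} out of the iteration.
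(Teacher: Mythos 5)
Your proof is correct and follows essentially the same route as the paper's: the Stampacchia truncation $v_k=(\abs{u}-k)^+\sign u$ tested against the coercive form, nonnegativity of the cross term via Lemma~\ref{lem:ouh-Q-ineq}, H\"older combined with the Sobolev and trace embeddings, and Lemma~\ref{lem:Sta-stop-lem} applied to a single scalar function combining the interior and boundary superlevel sets. The only deviation is bookkeeping: you track the linear dependence on $\norm{f}_{L^p(\Omega)}+\norm{g}_{L^q(\Gamma)}$ explicitly through the iteration (via the normalisation $\phi(k)\le 1$ for $k\ge k_*$ and a shifted application of the stopping lemma), whereas the paper absorbs $\phi(0)$ into the constant, first proves only $u\in L^\infty(\Omega)$, and then obtains the estimate~\eqref{eq:robin-bdd-est} from the closed graph theorem.
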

\begin{proof}
We may suppose that $\lambda=\omega_0$.
We use the relational symbol `$\leconst$' to express a less-than-or-equal inequality up to an negligible multiplicative positive constant, which may change from line to line.
We let $2^*=\frac{2d}{d-2}$ and $s=\frac{2(d-1)}{d-2}$ if $d>2$, and fix any $2^*>\frac{2p}{p-1}$ and $s>\frac{2q}{q-1}$ for $d=2$.
For $k\ge 0$ we let $Q_ku = (\abs{u}\wedge k)\sign u$ and $v_k=u-Q_k u=(\abs{u}-k)^+\sign u$.
Then by~\eqref{eq:Q-ineq} one has
\begin{equation}\label{eq:4.6}
    \Re \int_{\Omega}\nabla u\conj{\nabla v_k} = \Re \int_\Omega \nabla(u-Q_ku)\conj{\nabla v_k} + \Re\int_\Omega\nabla (Q_k u)\conj{\nabla v_k} \ge \int_\Omega \abs{\nabla v_k}^2.
\end{equation}
Moreover, note that $u\conj{v_k} = \abs{u}(\abs{u}-k)^+\ge\bigl((\abs{u}-k)^+\bigr)^2 = \abs{v_k}^2$.
Hence by~\eqref{eq:4.6} 
\begin{align*}
    \Re\Bigl(\int_\Omega f\conj{v_k} + \int_\Gamma g\conj{v_k}\Bigr) = \fa_{\omega_0}(u,v_k) 
        &\ge \omega_0\int_\Omega \abs{v_k}^2 + \int_\Omega \abs{\nabla v_k}^2 + \Re\int_\Gamma (\beta+\omega_0)\abs{(v_k)_\Gamma}^2 \\
        &=\Re \fa_{\omega_0}(v_k,v_k)\ge\alpha \norm{v_k}_{H^1(\Omega)}^2.
\end{align*}
Let $\Omega_k = \{x\in\Omega: \abs{u(x)}>k\}$ and $\Gamma_k = \{ z\in\Gamma : \abs{u_\Gamma(z)}>k\}$.
Let $\sigma = \Hm\niv\Gamma$.
Then by Hölder's inequality
\begin{align*}
    \alpha\norm{v_k}_{H^1(\Omega)}^2 &\le \norm{f\one_{\Omega_k}}_{L^2(\Omega)}\norm{v_k\one_{\Omega_k}}_{L^2(\Omega)} + \norm{g\one_{\Gamma_k}}_{L^2(\Gamma)}\norm{(v_k)_\Gamma\one_{\Gamma_k}}_{L^2(\Gamma)} \\
        & \le \norm{f}_{L^p(\Omega)}\abs{\Omega_k}^{\frac{1}{2}-\frac{1}{p}}\norm{v_k}_{L^{2^*}(\Omega)}\abs{\Omega_k}^{\frac{1}{2}-\frac{1}{2^*}} + \norm{g}_{L^q(\Gamma)}\sigma(\Gamma_k)^{\frac{1}{2}-\frac{1}{q}}\norm{(v_k)_\Gamma}_{L^s(\Gamma)}\sigma(\Gamma_k)^{\frac{1}{2}-\frac{1}{s}} \\
        & \leconst \Bigl(\abs{\Omega_k}^{1-\frac{1}{p}-\frac{1}{2^*}} + \sigma(\Gamma_k)^{1-\frac{1}{q}-\frac{1}{s}}\Bigr)\norm{v_k}_{H^1(\Omega)},
\end{align*}
where we used the Sobolev embedding $H^1(\Omega)\hookrightarrow L^{2^*}(\Omega)$ for the estimate on $\Omega$ and~\eqref{eq:trace-Ls-est} for the estimate on $\Gamma$.
So with
\[
    H(k) = \abs{\Omega_k}^{1-\frac{1}{p}-\frac{1}{2^*}} + \sigma(\Gamma_k)^{1-\frac{1}{q}-\frac{1}{s}}
\]
one has $\norm{v_k}_{H^1(\Omega)}\leconst H(k)$. Again by the Sobolev embedding theorem
it follows that
\begin{equation}\label{eq:Hk-inner-est}
    \norm{v_k}_{L^{2^*}(\Omega)} \leconst H(k),
\end{equation}
and by~\eqref{eq:trace-Ls-est} we obtain
\begin{equation}\label{eq:Hk-bdy-est}
    \norm{(v_k)_\Gamma}_{L^s(\Gamma)}\leconst H(k).
\end{equation}

Now fix $k\ge 0$ and let $h>k$. Then one has $\Omega_h\subset\Omega_k$ and $\abs{v_k} = (\abs{u}-k)^+\ge h-k$ on $\Omega_h$. Correspondingly $\Gamma_h\subset\Gamma_k$ and $\abs{(v_k)_\Gamma}\ge h-k$ on $\Gamma_h$.
Thus~\eqref{eq:Hk-inner-est} and~\eqref{eq:Hk-bdy-est} imply
\[
    (h-k)\abs{\Omega_h}^{\frac{1}{2^*}} \leconst H(k)
\]
and
\[
    (h-k)\sigma(\Gamma(h))^{\frac{1}{s}} \leconst H(k).
\]
Defining $\phi(t) = \abs{\Omega_t} + \sigma(\Gamma_t)^{\frac{2^*}{s}}$ for all $t\ge 0$, the previous estimates imply that 
\begin{align*}
    \phi(h)&\leconst (h-k)^{-2^*} H(k)^{2^*} \\
        &\leconst (h-k)^{-2^*}\Bigl(\abs{\Omega_k}^{1-\frac{1}{p}-\frac{1}{2^*}} + \sigma(\Gamma_k)^{1-\frac{1}{q}-\frac{1}{s}}\Bigr)^{2^*} \\
        &\leconst (h-k)^{-2^*}\Bigl(\phi(k)^{1-\frac{1}{p}-\frac{1}{2^*}} + \phi(k)^{(1-\frac{1}{q}-\frac{1}{s})\frac{s}{2^*}}\Bigr)^{2^*}. 
\end{align*}
Then  
\[
    \delta := \min\bigl\{(1-\tfrac{1}{p}-\tfrac{1}{2^*})2^*,(1-\tfrac{1}{q}-\tfrac{1}{s})s\bigr\} > 1
\]
and with $p_0 = \bigl((1-\frac{1}{p}-\frac{1}{2^*})2^*-\delta\bigr)\frac{1}{2^*}\ge 0$ and $q_0 = \bigl((1-\frac{1}{q}-\frac{1}{s})s-\delta\bigr)\frac{1}{2^*}\ge 0$ one has
\begin{align*}
    \phi(h) &\leconst (h-k)^{-2^*}\phi(k)^\delta\bigl(\phi(k)^{p_0} + \phi(k)^{q_0}\bigr)^{2^*} \\
        &\leconst (h-k)^{-2^*}\phi(k)^\delta\bigl(\phi(0)^{p_0} + \phi(0)^{q_0}\bigr)^{2^*} \\
        &\leconst (h-k)^{-2^*}\phi(k)^\delta.
\end{align*}
Lemma~\ref{lem:Sta-stop-lem} implies that $\phi(t_0)=0$ for some $t_0>0$. Hence $\abs{\Omega_{t_0}}=0$ and therefore $\abs{u(x)}\le t_0$ for a.e.~$x\in\Omega$.
Now the closed graph theorem implies that~\eqref{eq:robin-bdd-est} holds for some constant $c>0$.
\end{proof}

For real $\beta$, the following elliptic regularity result was proved in a much more general form by Nittka by reflecting the solution locally at the boundary along the Lipschitz graph and using the De~Giorgi--Nash theorem. So the argument employs inner regularity of the extended solution to obtain regularity up to the boundary. More general versions of this result (with coefficients, for Hölder regularity and with more precise estimates) can be found in~\cite[Theorem~3.14]{Nit11} or~\cite[Proposition~3.3.2]{Nittka2010:thesis}, and also for mixed boundary value conditions based on a different technique in~\cite{GR01}.

\begin{prop}\label{prop:robin-reg}
Let $p>\frac{d}{2}$, $q>(d-1)$, $\lambda\ge \omega_0$ and $\beta\in L^\infty(\Gamma)$.
Let $u\in H^1(\Omega)$ be the unique solution of~\eqref{eq:robin-reg} for $f\in L^p(\Omega)$ and $g\in L^q(\Gamma)$.
Then $u\in C(\clos{\Omega})$.
\end{prop}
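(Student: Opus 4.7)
My plan is to combine interior elliptic regularity with a localised boundary reflection argument, adapting Nittka's approach~\cite{Nit11,Nittka2010:thesis} from real to complex $\beta$. Interior continuity is immediate: Proposition~\ref{prop:robin-bdd} gives $u \in L^\infty(\Omega)$, so the interior equation $\Delta u = \lambda u - f$ yields $\Delta u \in L^p(\Omega)$ for the given $p > d/2$, and standard $W^{2,p}_{\mathrm{loc}}$-theory together with the Sobolev embedding $W^{2,p}_{\mathrm{loc}}(\Omega) \hookrightarrow C(\Omega)$ (valid since $2p > d$) delivers $u \in C(\Omega)$.

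For continuity up to $\Gamma$ I would work locally around an arbitrary $x_0 \in \Gamma$. Using that $\Omega$ has Lipschitz boundary I pick a ball $B \subset \RR^d$ centred at $0$ and a bi-Lipschitz chart $\Phi \colon B \to U$ with $\Phi(B \cap \{x_d > 0\}) = U \cap \Omega$ and $\Phi(B \cap \{x_d = 0\}) = U \cap \Gamma$. Setting $B^+ = B \cap \{x_d > 0\}$, the pullback $\tilde u = u \circ \Phi \in H^1(B^+)$ solves a divergence-form elliptic equation $-\partial_i(a_{ij}\partial_j \tilde u) + c \tilde u = \tilde f$ on $B^+$ with bounded measurable coefficients and real symmetric uniformly elliptic principal part $a_{ij}$, while the Robin condition transforms to $a_{ij}\nu_j\partial_i \tilde u + \tilde\gamma \tilde u = \tilde g$ on $B \cap \{x_d = 0\}$ with $\tilde\gamma \in L^\infty$ and $\tilde g \in L^q$. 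Extending $\tilde u$ by even reflection across $\{x_d = 0\}$ produces $\hat u \in H^1(B)$ which, by Green's formula, satisfies in the distributional sense a divergence-form equation on the full ball $B$, the Robin boundary contribution being absorbed into the right-hand side as the anti-linear functional $v \mapsto \int_{B \cap \{x_d = 0\}} (\tilde g - \tilde\gamma \tilde u)\bar v$. Thanks to the thresholds $p > d/2$ and $q > d - 1$ and the trace inequality~\eqref{eq:trace-Ls-est}, this functional lies in the admissible class for the De~Giorgi--Nash--Moser interior regularity theorem; hence $\hat u$ is Hölder continuous at $0$, and pulling back through $\Phi^{-1}$ yields continuity of $u$ at $x_0$.

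The hard part will be the step from real to complex $\beta$, which is the only genuine departure from~\cite{Nit11}. After reflection the principal part remains real and uniformly elliptic, and only the lower-order coefficient $c$ and the boundary weight $\tilde\gamma$ become complex bounded. I would overcome this by splitting $\hat u$ into real and imaginary parts: the resulting two real equations decouple in their principal parts and couple only through bounded lower-order terms, which the $L^\infty$-bound of Proposition~\ref{prop:robin-bdd} allows to be moved to the right-hand side as bounded inhomogeneities. The De~Giorgi--Nash theorem for real divergence-form equations then applies separately to each component, completing the proof that $u \in C(\clos\Omega)$.
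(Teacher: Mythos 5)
Your proposal is correct and follows essentially the same route as the paper: the paper simply invokes Nittka's Theorem~3.14 in~\cite{Nit11} (whose proof is exactly the boundary-flattening, even-reflection and De~Giorgi--Nash argument you sketch) and handles complex $\beta$ by the identical device of splitting into real and imaginary parts and absorbing the coupling terms $\Im\beta\,\Im u_\Gamma$ and $\Re\beta\,\Re u_\Gamma$ into the boundary datum via the $L^\infty$-estimate of Proposition~\ref{prop:robin-bdd}. The one point to flag is that your appeal to ``the admissible class for De~Giorgi--Nash--Moser'' for a right-hand side supported on the hyperplane $\{x_d=0\}$ is precisely the non-trivial content of Lemmas~3.10--3.13 of~\cite{Nit11}, so that step should be cited rather than treated as off-the-shelf.
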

\begin{proof}
While in~\cite{Nit11,Nittka2010:thesis} real-valued coefficients are considered throughout, our case of complex-valued $\beta\in L^\infty(\Gamma)$ can be deduced from~\cite[Theorem~3.14]{Nit11} in the real-valued case by applying the results to the real and imaginary part of the solution separately and using the $L^\infty$-estimate in Proposition~\ref{prop:robin-bdd} which allows to handle the additional terms of the type $\Im\beta\Im u_\Gamma$ or $\Re\beta\Re u_\Gamma$ on the right hand side.
\end{proof}

\begin{rem}
Alternatively, inspection of the proof of~\cite[Theorem~3.14]{Nit11} shows that the result there stays valid in full generality for $\beta\in L^\infty(\Gamma)$ complex-valued. It suffices to observe that the result is obtained from a corresponding result for the Neumann case $\beta=0$ in~\cite[Lemma~3.10]{Nit11} and a bootstrapping argument in~\cite[Lemmas~3.11 and~3.13]{Nit11}.
The result for the Neumann case is not affected since then all coefficients are real and one can consider real- and imaginary parts separately. However, the bootstrapping argument is not affected by $\beta$ being complex-valued, since it only relies on Sobolev embedding theorems (both on $\Omega$ and for the trace as in~\eqref{eq:trace-Ls-est}) and interpolation.
\end{rem}

\begin{lem}\label{lem:C-m-diss}
Adopt the notation and assumptions of Theorem~\ref{thm:Wentzell-C}.
The operator $\Delta^\rmW_\rmC - \omega_0$ is m-dissipative.
\end{lem}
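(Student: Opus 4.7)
The plan is to establish both the range condition and the resolvent estimate for $\Delta^\rmW_\rmC - \omega_0$ by reducing the Wentzell resolvent problem to a pure Robin problem of the type~\eqref{eq:robin-reg}, and then to harvest the $L^\infty$-bound from what has already been done in $L^2(\Omega)\oplus L^2(\Gamma)$. Fix $\mu>\omega_0$ and $F\in C(\clos{\Omega})$; the goal is to find $u\in D(\Delta^\rmW_\rmC)$ with $(\mu-\Delta^\rmW_\rmC)u=F$ satisfying $\norm{u}_{C(\clos{\Omega})}\le(\mu-\omega_0)^{-1}\norm{F}_{C(\clos{\Omega})}$. The key observation is that any such $u$ must satisfy $\restrict{\Delta u}{\Gamma}=\mu\restrict{u}{\Gamma}-\restrict{F}{\Gamma}$, so the Wentzell condition $\partial_\nu u+\beta\restrict{u}{\Gamma}+\restrict{\Delta u}{\Gamma}=0$ becomes the Robin condition $\partial_\nu u+(\mu+\beta)\restrict{u}{\Gamma}=\restrict{F}{\Gamma}$. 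This is precisely~\eqref{eq:robin-reg} with $\lambda=\mu$, $f=F$, $g=\restrict{F}{\Gamma}$, which fits Propositions~\ref{prop:robin-bdd} and~\ref{prop:robin-reg} for any large exponents since $F\in L^\infty(\Omega)$ and $\restrict{F}{\Gamma}\in L^\infty(\Gamma)$.

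For existence and regularity, coercivity of $\fa_\mu$ on $H^1(\Omega)$ gives a unique variational solution $u\in H^1(\Omega)$; Proposition~\ref{prop:robin-reg} upgrades it to $u\in C(\clos{\Omega})$, whence $\Delta u=\mu u-F\in C(\clos{\Omega})$ too. Reversing the algebraic manipulation recovers the Wentzell boundary condition, so $u\in D(\Delta^\rmW_\rmC)$ and $(\mu-\Delta^\rmW_\rmC)u=F$. Uniqueness will follow from the resolvent bound proved next (or directly from coercivity of $\fa_\mu$).

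For the norm estimate, the plan is to use that the pair $(u,\restrict{u}{\Gamma})$ is also the $L^2$-Wentzell resolvent $R(\mu,\Delta^\rmW)(F,\restrict{F}{\Gamma})$, since both solve the same variational equation on $H^1(\Omega)$ paired with the unique trace (Lipschitz setting). By Proposition~\ref{prop:W-Lap-contr} together with Corollary~\ref{cor:suff-inv} applied to $(\fa_{\omega_0},j)$, the part of $\Delta^\rmW-\omega_0$ in $L^\infty(\Omega)\oplus L^\infty(\Gamma)$ is m-dissipative, which rephrased at the level of $\mu$ gives
\[
\norm{(u,\restrict{u}{\Gamma})}_{L^\infty(\Omega)\oplus L^\infty(\Gamma)}\le \frac{1}{\mu-\omega_0}\,\norm{(F,\restrict{F}{\Gamma})}_{L^\infty(\Omega)\oplus L^\infty(\Gamma)}\le\frac{1}{\mu-\omega_0}\,\norm{F}_{C(\clos{\Omega})}.
\]
Since $u\in C(\clos{\Omega})$ implies $\norm{u}_{C(\clos{\Omega})}=\norm{u}_{L^\infty(\Omega)}$, this yields $\norm{R(\mu,\Delta^\rmW_\rmC)}_{\Linop(C(\clos{\Omega}))}\le(\mu-\omega_0)^{-1}$ for all $\mu>\omega_0$, which is exactly the m-dissipativity of $\Delta^\rmW_\rmC-\omega_0$.

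No single step here is genuinely hard: the substantial content sits in Propositions~\ref{prop:W-Lap-contr} and~\ref{prop:robin-reg}. The only real care needed is in matching the $L^2$-, $L^\infty$-, and $C(\clos{\Omega})$-theories—specifically, verifying that the Robin solution supplied by Proposition~\ref{prop:robin-reg} actually coincides with the $L^2$-resolvent so that the $L^\infty$-estimate from Corollary~\ref{cor:suff-inv} transfers to the sup norm on $C(\clos{\Omega})$.
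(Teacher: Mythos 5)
Your proof is correct and follows essentially the same route as the paper: both arguments reduce the Wentzell resolvent equation to the Robin problem~\eqref{eq:robin-reg}, invoke Proposition~\ref{prop:robin-reg} to get continuity up to the boundary of the solution, and transfer the $L^\infty$-bound obtained from Proposition~\ref{prop:W-Lap-contr} and Corollary~\ref{cor:suff-inv}. The only difference is presentational: the paper phrases this as invariance of $F=\{(u,\restrict{u}{\Gamma}):u\in C(\clos{\Omega})\}$ under the resolvent of the part $A_\infty$ of $-\Delta^\rmW+\omega_0$ in $L^\infty(\Omega)\oplus L^\infty(\Gamma)$ and then identifies the part of $A_\infty$ in $F$ with $-\Delta^\rmW_\rmC+\omega_0$, whereas you solve the resolvent equation in $C(\clos{\Omega})$ directly.
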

\begin{proof}
Set $A=-\Delta^\rmW+\omega_0$.
The proof of Proposition~\ref{prop:W-Lap-contr} shows that we can apply Corollary~\ref{cor:suff-inv} in order to obtain that $-A_\infty$ is m-dissipative,
where $A_\infty$ is the part of $A$ in $L^\infty(\Omega)\oplus L^\infty(\Gamma)$.

Note that
\begin{align*}
    D(A) &= \{ (u,u_\Gamma) : u\in H^1(\Omega),\ \Delta u\in L^2(\Omega),\ \partial_\nor u\in L^2(\Gamma)\},\\
    A(u,u_\Gamma) &= (-\Delta u, \partial_\nor u + \beta u_\Gamma) + \omega_0(u,u_\Gamma).
\end{align*}
Set $F = \{ (u,\restrict{u}{\Gamma}) : u\in C(\clos{\Omega})\}$. Let $\lambda>0$, $(f,\restrict{f}{\Gamma})\in F$ and
\[
    (u,u_\Gamma) = (\lambda+A_\infty)^{-1}(f,\restrict{f}{\Gamma}).
\]
Then $u\in H^1(\Omega)\cap L^\infty(\Omega)$, $\lambda u - \Delta u + \omega_0 u=f$ and $\lambda u_\Gamma + \partial_\nor u + \beta u_\Gamma + \omega_0 u_\Gamma = \restrict{f}{\Gamma}$.
It follows from Proposition~\ref{prop:robin-reg} that $u\in C(\clos{\Omega})$. Thus $u_\Gamma = \restrict{u}{\Gamma}$ and $(u,\restrict{u}{\Gamma})\in F$.
This shows that $(\lambda + A_\infty)^{-1}F\subset F$ for all $\lambda>0$.
Let $A_F$ be the part of $A$ in $F$. Then
\begin{align*}
    D(A_F) &= \{ (u,\restrict{u}{\Gamma}) \in F\cap D(A) : A(u,\restrict{u}{\Gamma})\in F\} \\
    &= \{ (u,\restrict{u}{\Gamma}) : u\in C(\clos{\Omega})\cap H^1(\Omega),\ \Delta u\in C(\clos{\Omega}),\ \partial_\nor u + \beta\restrict{u}{\Gamma}\in C(\Gamma),\\
    &\qquad  (-\Delta u + \omega_0 u, \partial_\nor u + \beta\restrict{u}{\Gamma} + \omega_0\restrict{u}{\Gamma})\in F\} \\
    &= \{ (u,\restrict{u}{\Gamma}) : u\in C(\clos{\Omega})\cap H^1(\Omega),\ \Delta u\in C(\clos{\Omega}),\ \partial_\nor u + \beta\restrict{u}{\Gamma} + \restrict{(\Delta u)}{\Gamma} = 0\}
\end{align*}
and $A_F(u,\restrict{u}{\Gamma}) = (-\Delta u+\omega_0u,\partial_\nor u + \beta\restrict{u}{\Gamma}+\omega_0\restrict{u}{\Gamma})$.
We may identify $C(\clos{\Omega})$ with $F$ via $u\mapsto (u,\restrict{u}{\Gamma})$ and
obtain $A_F=-\Delta^\rmW_\rmC +\omega_0$. Since $-A_\infty$ is m-dissipative, also $-A_F$ is m-dissipative and the claim follows.
\end{proof}

Since $\beta\in L^\infty(\Gamma)$ may not be continuous, it is not easy to see why $\Delta^\rmW_\rmC$ is densely defined in $C(\clos{\Omega})$ in general.
Following an idea of Nittka in~\cite[Lemma~4.6]{Nit11} we give a proof using the estimate in Proposition~\ref{prop:robin-bdd}. We simplify the argument for the case considered here.
\begin{lem}\label{lem:C-densedef}
Adopt the notation and assumptions of Theorem~\ref{thm:Wentzell-C}.
The operator $\Delta^\rmW_\rmC$ is densely defined in $C(\clos{\Omega})$.
\end{lem}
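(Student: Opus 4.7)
The plan is to approximate $f \in C(\clos{\Omega})$ by $u_\lambda := \lambda R(\lambda,-A)f$, where $A := -\Delta^\rmW_\rmC + \omega_0$ is m-dissipative by Lemma~\ref{lem:C-m-diss}. Each $u_\lambda$ lies in $D(A) = D(\Delta^\rmW_\rmC)$ and satisfies $\norm{u_\lambda}_\infty \le \norm{f}_\infty$, so the density is equivalent to proving $u_\lambda \to f$ in $C(\clos{\Omega})$ as $\lambda \to \infty$ for every $f \in C(\clos{\Omega})$. A routine $\eps/3$-argument, using the uniform contractivity of $\lambda R(\lambda,-A)$, reduces this further to the dense subspace $C^\infty(\clos{\Omega})$.

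Fix $f \in C^\infty(\clos{\Omega})$. Writing $(\lambda + A) u_\lambda = \lambda f$ together with the Wentzell boundary condition and subtracting the corresponding (now valid, since $f$ is smooth) identities for $f$, a direct computation shows that $w_\lambda := u_\lambda - f$ satisfies
\[
    (\lambda + B)(w_\lambda, \restrict{w_\lambda}{\Gamma}) = (f_0, g_0),
\]
where $B := -\Delta^\rmW + \omega_0$ is the operator of Theorem~\ref{thm:gen-Wentzell-H} and the $\lambda$-independent data
\[
    f_0 := \Delta f - \omega_0 f \in L^\infty(\Omega), \qquad g_0 := -\partial_\nu f - (\omega_0 + \beta)\restrict{f}{\Gamma} \in L^\infty(\Gamma)
\]
are fixed. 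Equivalently, $w_\lambda$ is the unique $H^1$-solution of the Robin problem~\eqref{eq:robin-reg} with parameter $\lambda' := \lambda + \omega_0$ and right-hand sides $f_0, g_0$.

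The crux is to show $\norm{w_\lambda}_{L^\infty(\Omega)} \to 0$. Following Nittka's idea, this can be obtained by revisiting the Stampacchia-type proof of Proposition~\ref{prop:robin-bdd} while tracking the dependence of the constants on $\lambda'$: once $\omega_0$ has been chosen so that $\omega_0 + \Re\beta \ge 0$, the zeroth-order term in $\fa_{\lambda'}$ contributes extra coercivity of order $\lambda'$ both on $L^2(\Omega)$ and on $L^2(\Gamma)$. Propagating this through the Hölder and Sobolev steps yields a Stampacchia inequality whose multiplicative constant decays like a negative power of $\lambda'$, and Lemma~\ref{lem:Sta-stop-lem} then gives $\norm{w_\lambda}_\infty \to 0$.

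The principal obstacle is exactly this $\lambda'$-quantification: Proposition~\ref{prop:robin-bdd} is stated only uniformly in $\lambda' \ge \omega_0$ and its proof does not exploit the growing zeroth-order coercivity, so the bootstrap has to be rerun with care to extract the quantitative decay. As an alternative route that bypasses Proposition~\ref{prop:robin-bdd} entirely, one may invoke Proposition~\ref{prop:W-Lap-contr} together with Corollary~\ref{cor:suff-inv}: the part $B_\infty$ of $B$ in $L^\infty(\Omega) \oplus L^\infty(\Gamma)$ satisfies $\norm{R(\lambda,-B_\infty)}_{\Linop(L^\infty)} \le 1/\lambda$, and since $(w_\lambda, \restrict{w_\lambda}{\Gamma}) \in D(B_\infty)$ (thanks to the smoothness of $f$), one obtains directly $\norm{w_\lambda}_\infty \le \norm{(f_0, g_0)}_\infty / \lambda \to 0$, which closes the $\eps/3$-argument and proves the density.
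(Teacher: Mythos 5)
Your proposal is correct, but it takes a genuinely different route from the paper. The paper keeps $\lambda$ fixed at $\omega_0$ and, for $w\in C^\infty(\clos{\Omega})$, builds the approximant as the solution of the Robin problem~\eqref{eq:robin-reg} with a cleverly chosen smooth datum $\Phi=\Phi_0+\Phi_1$: here $\Phi_0\in C^\infty_\cpt(\Omega)$ matches $\lambda w-\Delta w$ in $L^p(\Omega)$, while $\Phi_1$ is concentrated near $\Gamma$ so that its trace approximates the (in general discontinuous) boundary datum $\partial_\nu w+(\beta+\lambda)\restrict{w}{\Gamma}$ in $L^q(\Gamma)$ yet stays small in $L^p(\Omega)$; the $L^p$--$L^q$ a priori bound of Proposition~\ref{prop:robin-bdd} then gives $\norm{u-w}_{C(\clos{\Omega})}\le\eps$. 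You instead send $\lambda\to\infty$ in $u_\lambda=\lambda R(\lambda,-A)f$ and control $w_\lambda=u_\lambda-f$ through the identity $(\lambda+B)(w_\lambda,\restrict{w_\lambda}{\Gamma})=(f_0,g_0)$ with $\lambda$-independent data. Of your two suggested ways to conclude, it is the second that actually closes the argument: since $(f_0,g_0)=(\Delta f-\omega_0 f,\,-\partial_\nu f-(\omega_0+\beta)\restrict{f}{\Gamma})$ lies in $L^\infty(\Omega)\oplus L^\infty(\Gamma)$ for smooth $f$ on a Lipschitz domain, and $R(\lambda,-B_\infty)=\restrict{R(\lambda,-B)}{L^\infty}$ by the proof of Corollary~\ref{cor:suff-inv}, one gets $\norm{w_\lambda}_{L^\infty(\Omega)}\le\lambda^{-1}\norm{(f_0,g_0)}_{L^\infty}\to 0$; the first way (rerunning Stampacchia while tracking the $\lambda$-dependence of the constants) is only sketched and is not needed. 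As for what each approach buys: the paper's argument uses a single resolvent and leans on the quantitative estimate~\eqref{eq:robin-bdd-est} together with a neat localisation of the right-hand side; yours bypasses Proposition~\ref{prop:robin-bdd} in this step and is more operator-theoretic, at the price of invoking the whole family of resolvents, hence Proposition~\ref{prop:robin-reg} for every large $\lambda$ (via Lemma~\ref{lem:C-m-diss}) to guarantee $u_\lambda\in C(\clos{\Omega})$. Both arguments ultimately rest on the same two inputs, namely continuity up to the boundary of the Robin resolvent and the $L^\infty$-contractivity established in Proposition~\ref{prop:W-Lap-contr}.
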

\begin{proof}
\begin{parenum}
\item By the Stone--Weierstrass theorem, the space $C^\infty(\clos{\Omega})$ is dense in $C(\clos{\Omega})$. So it suffices to show that every $w\in C^\infty(\clos{\Omega})$ can be approximated arbitrarily well in $C(\clos{\Omega})$ by a $u\in D(\Delta^\rmW_\rmC)$.
\item Let $w\in C^\infty(\clos{\Omega})$.
Let $\lambda\ge \omega_0$. Then~\eqref{eq:robin-reg} has a unique solution for all $(f,g)\in L^2(\Omega)\oplus L^2(\Gamma)$.
Suppose that $\Phi\in C^\infty(\clos{\Omega})$. We will choose $\Phi$ suitably later.
There exists a unique $u\in H^1(\Omega)$ such that
\begin{equation}\label{eq:Phi-rhs}
    \left\{\begin{aligned}
    \lambda u - \Delta u &= \Phi & \text{on $\Omega$,} \\
    \partial_\nor u + \beta u_\Gamma + \lambda u_\Gamma &= \restrict{\Phi}{\Gamma} & \text{on $\Gamma$.}
    \end{aligned}\right.
\end{equation}
Then $u\in D(\Delta^\rmW_\rmC)$ by Proposition~\ref{prop:robin-reg} and $\restrict{(\Delta u)}{\Gamma}=-\restrict{\Phi}{\Gamma}+\lambda\restrict{u}{\Gamma} = -\partial_\nor u - \beta\restrict{u}{\Gamma}$.

Let $\eps>0$. We show that we can choose $\Phi\in C^\infty(\clos{\Omega})$ such that $\norm{w-u}_{C(\clos{\Omega})}\le \eps$. In fact, by~\eqref{eq:Phi-rhs} we have
\[
    \lambda(u-w)-\Delta(u-w) = \Phi - \lambda w + \Delta w =: f
\]
and
\[
    \partial_\nor(u-w) + \beta\restrict{(u-w)}{\Gamma} + \lambda\restrict{(u-w)}{\Gamma} = \restrict{\Phi}{\Gamma} - \partial_\nor w - (\beta+\lambda)\restrict{w}{\Gamma} =: g.
\]
Fix $p,q>\max\{2,d\}$. Then $f\in L^p(\Omega)$ and $g\in L^q(\Gamma)$.
By Proposition~\ref{prop:robin-bdd} there exists a constant $c>0$ independent of $f$ and $g$ such that
\begin{equation}\label{eq:est-dense-diff}
    \norm{u-w}_{C(\clos{\Omega})} \le c\bigl(\norm{f}_{L^p(\Omega)} + \norm{g}_{L^q(\Gamma)}\bigr).
\end{equation}
Let $\tilde{\eps}=\frac{\eps}{c}$. Choose first $\Phi_0\in C^\infty_\cpt(\Omega)$ such that
\[
    \norm{\Phi_0 - \lambda w + \Delta w}_{L^p(\Omega)}< \frac{\tilde{\eps}}{3}
\]
and then choose $\Phi_1\in C^\infty(\RR^d)$ supported in a neighbourhood of $\Gamma$ such that both $\norm{\Phi_1}_{L^p(\Omega)}<\frac{\tilde{\eps}}{3}$ and
\[
    \norm[\big]{\restrict{\Phi_1}{\Gamma} - \partial_\nor w - (\beta+\lambda)\restrict{w}{\Gamma}}_{L^q(\Gamma)} < \frac{\tilde{\eps}}{3}.
\]
Now choose $\Phi = \Phi_0+\Phi_1\in C^\infty(\clos{\Omega})$. 
Then $\norm{f}_{L^p(\Omega)}\le2\frac{\tilde{\eps}}{3}$ and $\norm{g}_{L^q(\Gamma)}\le\frac{\tilde{\eps}}{3}$. It follows from~\eqref{eq:est-dense-diff} that
\[
    \norm{u-w}_{C(\clos{\Omega})} \le c\tilde{\eps} = \eps.
\]
This proves density.\qedhere
\end{parenum}
\end{proof}
Now Theorem~\ref{thm:Wentzell-C} follows from Lemmas~\ref{lem:C-m-diss} and~\ref{lem:C-densedef}.

Finally, we show that the semigroup generated by the Wentzell Laplacian on $L^2(\Omega)\oplus L^2(\Gamma)$ has a continuous kernel.
In order to obtain a more succinct formulation, we equip $\clos{\Omega}=\Omega\sqcup\Gamma$ with the measure $\mu$ that is the sum of the $d$-dimensional Lebesgue measure on $\Omega$ and $\Hm$ on $\Gamma$. Then $\mu$ is a finite Borel measure on the compact Euclidean space $\clos{\Omega}$, and for all $p\in[1,\infty]$ we can identify $L^p(\Omega)\oplus L^p(\Gamma)=L^p(\clos{\Omega},\mu)$. 
\begin{thm}
Adopt the notation and assumptions of Theorem~\ref{thm:Wentzell-C}.
Then the $C_0$-semigroup $(S(t))_{t\ge 0}$ on $L^2(\clos{\Omega},\mu)$ generated by $\Delta^\rmW$ has a continuous kernel on $\clos{\Omega}$, i.e.\ for all $t>0$ there exists a continuous function $k_t\colon \clos{\Omega}\times\clos{\Omega}\to\CC$ such that
\[
    \bigl(S(t)f\bigr)(x)=\int_{\clos{\Omega}} k_t(x,y)f(y)\dx[\mu(y)] 
\]
for all $f\in L^2(\clos{\Omega},\mu)$ and $x\in\clos{\Omega}$. In particular, the $C_0$-semigroup $(S_\rmC(t))_{t\ge 0}$ on $C(\clos{\Omega})$ generated by $\Delta^\rmW_\rmC$ is given by
\[
    \bigl(S_\rmC(t)f\bigr)(x)=\int_\Omega k_t(x,y)f(y)\dx[y] + \int_\Gamma k_t(x,z)f(z)\dx[\Hm(z)] 
\]
for all $t>0$, $f\in C(\clos{\Omega})$ and $x\in\clos{\Omega}$.
\end{thm}
\begin{proof}

With $\omega_0\ge 0$ as above the holomorphic $C_0$-semigroup $(T(t))_{t\ge 0}$ on $L^2(\clos{\Omega},\mu)$ given by $T(t)=e^{-\omega_0t}S(t)$ is contractive on both $L^2(\clos{\Omega},\mu)$ and $L^\infty(\clos{\Omega},\mu)$.

Fix $p_0>d$. Interpolation between $L^2(\clos{\Omega},\mu)$ and $L^\infty(\clos{\Omega},\mu)$ immediately implies that the restriction of $(T(t))_{t\ge 0}$ to $L^{p_0}(\clos{\Omega},\mu)$ is a $C_0$-semigroup on $L^{p_0}(\clos{\Omega},\mu)$. By the Stein interpolation theorem~\cite[Theorem~10.8]{AV2015:isem} we obtain that the restriction is even a holomorphic $C_0$-semigroup on $L^{p_0}(\clos{\Omega},\mu)$.

Let $-A$ denote the generator of $(T(t))_{t\ge 0}$.
Suppose that $(f,g)\in L^{p_0}(\Omega)\oplus L^{p_0}(\Gamma)$ and let $\lambda>0$.
Then it follows that $(\lambda+A)^{-1}(f,g)=(u,u_\Gamma)$ for an $u\in H^1(\Omega)$. By Proposition~\ref{prop:robin-reg} one has $u\in C(\clos{\Omega})$.
This shows that the domain of the generator of the restriction of $(T(t))_{t\ge0}$ to $L^{p_0}(\clos{\Omega},\mu)$ is contained in $C(\clos{\Omega})$.
So by holomorphy $T(t)L^{p_0}(\clos{\Omega},\mu)\subset C(\clos{\Omega})$.
Replacing $\beta$ with $\conj{\beta}$ we obtain in the same way that $T(t)^*L^{p_0}(\clos{\Omega},\mu)\subset C(\clos{\Omega})$.
Now the claim follows by~\cite[Theorem~2.1]{AtE20} and rescaling.
\end{proof}

\section{The Wentzell Laplacian for \texorpdfstring{$H^1(\Omega)$}{H¹(Ω)} functions with trace}\label{sec:Wentzell-H}

Adopt the notation and assumptions as in the beginning of Section~\ref{sec:W-Lap-apptrace}.
We consider the closed subspace 
\[
    V=\{(u,\phi)\in H^1(\Omega)\oplus L^2(\Gamma,\sigma) : \phi\in\Tr(u) \}
\]
of $H^1(\Omega)\oplus L^2(\Gamma,\sigma)$.
Let $\Delta^\rmW_V$ be the part of $\Delta^\rmW$ in $V$, i.e.
\begin{align*}
    D(\Delta^\rmW_V) &= \{ (u,\phi)\in H^1(\Omega)\oplus L^2(\Gamma,\sigma) : \Delta u\in H^1(\Omega),\ \partial_\nor u\in L^2(\Gamma,\sigma),\\
        &\qquad\qquad \phi\in\Tr(u),\ -\partial_\nor u - \beta\phi\in\Tr(\Delta u)\},\\
    \Delta^\rmW_V(u,\phi) &= (\Delta u, -\partial_\nor u - \beta \phi).
\end{align*}
We show that this realisation of the Wentzell Laplacian is a semigroup generator.

\begin{thm}\label{thm:gen-V}
Adopt the notation an assumptions of Theorem~\ref{thm:gen-Wentzell-H}.
Then the operator $\Delta^\rmW_V$ generates a holomorphic $C_0$-semigroup on $V$.
\end{thm}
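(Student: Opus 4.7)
The plan is to recognise $V$, equipped with its natural $H^1(\Omega)\oplus L^2(\Gamma)$ inner product, as precisely the form domain of the sectorial form $(\fa,j)$ that already generated $\Delta^\rmW$ in Theorem~\ref{thm:gen-Wentzell-H}, and then to invoke the classical principle that a closed sectorial form generates a holomorphic $C_0$-semigroup not only on the pivot Hilbert space but also on the form domain.

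First I would extend $\fa$ to a continuous sesquilinear form $\bar\fa\colon V\times V\to\CC$ by
\[
    \bar\fa((u,\phi),(v,\psi)) = \int_\Omega\nabla u\cdot\conj{\nabla v} + \int_\Gamma\beta\,\phi\conj{\psi},
\]
and let $\tilde\jmath\colon V\to L^2(\Omega)\oplus L^2(\Gamma) =: H$ be the natural inclusion. By the very definition of the approximative trace, for each $(u,\phi)\in V$ there exists a sequence $(u_n)$ in $D(\fa) = C(\clos{\Omega})\cap H^1(\Omega)$ with $j(u_n)=(u_n,\restrict{u_n}{\Gamma})\to(u,\phi)$ in the $V$-norm, so $j(D(\fa))$ is dense in $V$; and $\tilde\jmath$ is clearly injective. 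Choosing $\omega_0$ as in Proposition~\ref{prop:W-Lap-contr}, the shifted form $\bar\fa_{\omega_0}$ is $V$-coercive and bounded, placing us in the classical setting of a closed sectorial form on the Gelfand triple $V\hookrightarrow H\hookrightarrow V^*$.

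I would then appeal to the Lions-type extrapolation/restriction result (see, e.g., \cite[Chapter~1]{Ouhabaz2005}): the operator associated with $\bar\fa$ on $H$, which by the proof of Theorem~\ref{thm:gen-Wentzell-H} is exactly $-\Delta^\rmW$, generates a holomorphic $C_0$-semigroup on $H$, and the part of this operator in $V$ generates a holomorphic $C_0$-semigroup on $V$. To finish, I would verify that the part of $-\Delta^\rmW$ in $V$ is precisely $-\Delta^\rmW_V$: this is immediate from unfolding definitions, since $(u,\phi)\in D(\Delta^\rmW)\cap V$ satisfies $\Delta^\rmW(u,\phi)=(\Delta u,-\partial_\nu u-\beta\phi)\in V$ if and only if $\Delta u\in H^1(\Omega)$ and $-\partial_\nu u-\beta\phi\in\Tr(\Delta u)$, which matches the definition of $D(\Delta^\rmW_V)$ in the statement.

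The main obstacle will be pinning down the abstract theorem guaranteeing generation on the form domain as well as on the pivot Hilbert space. The identification of $V$ with the form closure and of the part of $-\Delta^\rmW$ in $V$ with $-\Delta^\rmW_V$ are essentially bookkeeping. As a fallback in case no direct reference fits precisely, one could argue via the Gelfand-triple extrapolation, first extending $-\Delta^\rmW$ to a generator on $V^*$ and then using the holomorphy together with $D(\Delta^\rmW)\subset V$ to restrict back to $V$; attempting instead to set up a fresh form method directly on $V$ with the $V$-inner product would be cumbersome, as the resulting pairing produces expressions involving $\Delta u$ and second-order boundary terms.
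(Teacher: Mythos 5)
Your proposal is correct and follows essentially the same route as the paper: both pass to the continuous, coercive (after a shift) form on $V$ viewed as a densely embedded subspace of $H=L^2(\Omega)\oplus L^2(\Gamma)$, identify the associated operator with $\lambda-\Delta^\rmW$, and invoke the classical result (Tanabe, as presented in Arendt's survey, in the paper; a Lions-type Gelfand-triple result in your write-up) that the part of the generator in the form domain generates a holomorphic $C_0$-semigroup there. The only minor point is that the shift must be taken large enough to also control $\norm{\phi}_{L^2(\Gamma)}^2$ in the coercivity estimate, i.e.\ $\omega_0+\Re\beta$ bounded below by a positive constant rather than merely nonnegative as in Proposition~\ref{prop:W-Lap-contr}, but this is a trivial adjustment.
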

\begin{proof}
It is easily verified that for $\lambda\in\RR$ sufficiently large the continuous form $\fb\colon V\times V\to\CC$ given by
\[
    \fb\bigl((u,\phi), (v,\psi)\bigr)=\int_\Omega\nabla u\cdot\conj{\nabla v} + \int_\Gamma \beta\phi\conj{\psi}\dx[\sigma] + \lambda\Bigl(\scalar{u}{v}_{L^2(\Omega)} + \scalar{\phi}{\psi}_{L^2(\Gamma,\sigma)}\Bigr)
\]
is coercive and densely defined in $H=L^2(\Omega)\oplus L^2(\Gamma,\sigma)$.
One readily verifies that it is associated with the m-sectorial operator $\lambda-\Delta^\rmW$ in $H$.

As explained in~\cite[Section~5.5.2]{Are04:evo-survey} it follows from a result by Tanabe~\cite[Section~3.6]{Tan79} that $\lambda-\Delta^\rmW_V$, the part of $\lambda-\Delta^\rmW$ in $V$, generates a holomorphic $C_0$-semigroup on $V$.
\end{proof}

If $\Omega$ is Lipschitz and $\sigma=\Hm\niv\Gamma$, then $V$ and $H^1(\Omega)$ are isomorphic. 
In fact, in this case the injective map $\tilde{j}\colon H^1(\Omega)\to V$, $\tilde{j}(u)=(u,u_\Gamma)$ is continuous by~\eqref{eq:trace-Ls-est} and surjective, and hence an isomorphism by the closed graph theorem.
Therefore we obtain the following as an immediate consequence of Theorem~\ref{thm:gen-V}.
\begin{cor}
Adopt the notation and assumptions of Theorem~\ref{thm:Wentzell-C}.
Then the operator $\Delta^\rmW_{H^1}$ given by
\begin{align*}
    D(\Delta^\rmW_{H^1}) &= \{ u\in H^1(\Omega) : \Delta u\in H^1(\Omega), \partial_\nor u + \beta u_\Gamma + (\Delta u)_\Gamma=0\},\\
    \Delta^\rmW_{H^1}u &= \Delta u,
\end{align*}
generates a holomorphic $C_0$-semigroup on $H^1(\Omega)$.
\end{cor}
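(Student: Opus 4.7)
The plan is to exploit the fact that on a Lipschitz domain the trace map $u\mapsto u_\Gamma\colon H^1(\Omega)\to L^2(\Gamma)$ is well-defined, continuous, and single-valued (as recalled at the beginning of Section~\ref{sec:Wentzell-C}). Consequently the linear map
\[
    \Psi\colon H^1(\Omega)\to V,\qquad \Psi(u) = (u, u_\Gamma),
\]
takes values in $V$, is injective, and is obviously surjective since any $(u,\phi)\in V$ forces $\phi=u_\Gamma$. Continuity of $\Psi$ is immediate from the continuity of the trace, and the inverse (projection onto the first coordinate) is bounded, so $\Psi$ is a Banach space isomorphism of $H^1(\Omega)$ onto $V$ equipped with the $H^1(\Omega)\oplus L^2(\Gamma)$ norm.

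Next I would verify that $\Psi$ conjugates $\Delta^\rmW_{H^1}$ into $\Delta^\rmW_V$. Given $u\in D(\Delta^\rmW_{H^1})$, one has $u\in H^1(\Omega)$, $\Delta u\in H^1(\Omega)$ and $\partial_\nu u + \beta u_\Gamma + (\Delta u)_\Gamma=0$; from the last identity, together with $\beta\in L^\infty(\Gamma)$ and $u_\Gamma,(\Delta u)_\Gamma\in L^2(\Gamma)$, we read off $\partial_\nu u = -\beta u_\Gamma - (\Delta u)_\Gamma\in L^2(\Gamma)$ and $-\partial_\nu u -\beta u_\Gamma = (\Delta u)_\Gamma \in \Tr(\Delta u)$. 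Hence $\Psi(u)=(u,u_\Gamma)\in D(\Delta^\rmW_V)$ and
\[
    \Delta^\rmW_V \Psi(u) = (\Delta u, -\partial_\nu u - \beta u_\Gamma) = (\Delta u,(\Delta u)_\Gamma) = \Psi(\Delta u).
\]
Conversely, if $(u,\phi)\in D(\Delta^\rmW_V)$, then uniqueness of the trace forces $\phi = u_\Gamma$ and $-\partial_\nu u -\beta u_\Gamma = (\Delta u)_\Gamma$, which rearranges to the boundary condition defining $D(\Delta^\rmW_{H^1})$. Thus $\Psi$ restricts to a bijection $D(\Delta^\rmW_{H^1})\to D(\Delta^\rmW_V)$ and $\Psi\circ\Delta^\rmW_{H^1} = \Delta^\rmW_V\circ\Psi$.

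Finally, by Theorem~\ref{thm:gen-V} the operator $\Delta^\rmW_V$ generates a holomorphic $C_0$-semigroup on $V$. Transporting this semigroup through the isomorphism $\Psi$ yields a holomorphic $C_0$-semigroup on $H^1(\Omega)$ whose generator is precisely $\Delta^\rmW_{H^1}$. The only step that requires any care, and thus is the main obstacle, is the bookkeeping involved in matching the two domains: one must use the uniqueness of the trace in the Lipschitz setting to argue that the condition $-\partial_\nu u -\beta\phi\in\Tr(\Delta u)$ in the definition of $D(\Delta^\rmW_V)$ is equivalent to the pointwise (in $L^2(\Gamma)$) Wentzell boundary condition appearing in $D(\Delta^\rmW_{H^1})$, and to notice that the $L^2(\Gamma)$-integrability of $\partial_\nu u$ is automatic from that boundary condition.
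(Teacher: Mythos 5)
Your proposal is correct and follows exactly the paper's route: the paper also deduces the corollary from Theorem~\ref{thm:gen-V} via the isomorphism $u\mapsto(u,u_\Gamma)$ between $H^1(\Omega)$ and $V$ in the Lipschitz case, merely stating it as an immediate consequence while you spell out the domain bookkeeping. Your details are accurate (the only cosmetic remark being that the boundary condition in $D(\Delta^\rmW_{H^1})$ already presupposes $\partial_\nu u\in L^2(\Gamma)$ for the equation to be meaningful, rather than implying it).
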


\section{Appendix}

Let $(S(t))_{t\ge 0}$ be a semigroup on $L^2(X)$ for a finite measure space $(X,\Sigma,\mu)$.
Then $(S(t))_{t\ge 0}$ is called \emphdef{positive} if $f\ge 0$ implies $S(t)f\ge 0$ for all $t> 0$. Here $f\ge0$ means that $f(x)\in\RR_+$ a.e., where $\RR_+ = \{t\in\CC : \text{$\Re t\ge 0$ and $\Im t=0$}\}$.
Moreover, $(S(t))_{t\ge 0}$ is called \emphdef{submarkovian} if the semigroup is both positive and $L^\infty$-contractive.
It is easy to show that $(S(t))_{t\ge 0}$ is submarkovian if and only if it is positive and $S(t)\one_\Omega\le \one_\Omega$ for all $t>0$; see~\cite[Remark~9.3\,(d)]{AV2015:isem}.
In the following example it is convenient to show $L^\infty$-contractivity by proving that the semigroup is even submarkovian.

Let $\Omega\subset\RR^d$ be open and bounded. We define the Neumann Laplacian $\Delta^\rmN$ in $L^2(\Omega)$ by
\begin{align*}
    D(\Delta^\rmN) &= \{ u\in H^1(\Omega) : \Delta u\in L^2(\Omega),\ \partial_\nor u = 0\}, \\
    \Delta^\rmN u &= \Delta u.
\end{align*}

The next fact is well-known, of course. For the reader's convenience, we give a simple argument using the tools from Section~\ref{sec:fm-incompl}.
\begin{lem}\label{lem:N-Lap-submarkovian}
The operator $\Delta^\rmN$ generates a submarkovian $C_0$-semigroup $(S(t))_{t\ge 0}$ on $L^2(\Omega)$.
\end{lem}
\begin{proof}
\begin{parenum}
\item 
Let $D(\fa)=H^1(\Omega)$, $j\colon D(\fa)\to L^2(\Omega)$ be the inclusion and define $\fa\colon D(\fa)\times D(\fa)\to\CC$ by $\fa(u,v) = \int_\Omega\nabla u\cdot\conj{\nabla v}$.
Then $\fa(u,u) + \norm{j(u)}^2_{L^2(\Omega)} = \norm{u}^2_{H^1(\Omega)}$.
It is now straightforward to verify that $(\fa,j)$ is associated with $-\Delta^\rmN$. It follows that $\Delta^\rmN$ generates a (holomorphic) $C_0$-semigroup $(S(t))_{t\ge 0}$ on $L^2(\Omega)$.

\item The map $f\mapsto(\Re f)^+$ is the orthogonal projection of $L^2(\Omega)$ onto 
the positive cone $L^2(\Omega)_+ = \{ f\in L^2(\Omega;\RR) : f\ge 0\}$.
Since for $u\in H^1(\Omega)$ one has $(\Re u)^+\in H^1(\Omega)$,
it easily follows from Proposition~\ref{prop:suff-inv} that $S(t)L^2(\Omega;\RR)_+\subset L^2(\Omega;\RR)_+$. Hence $(S(t))_{t\ge 0}$ is positive.

\item Since $\fa(\one_\Omega,v)=0$ for all $v\in H^1(\Omega)$, one has $\one_\Omega\in D(\Delta^\rmN)$ and $\Delta^N\one_\Omega = 0$. Thus $S(t)\one_\Omega = \one_\Omega$ for all $t>0$. This implies that the semigroup is submarkovian.\qedhere
\end{parenum}
\end{proof}

Since the form in the proof of Lemma~\ref{lem:N-Lap-submarkovian} is actually $j$-elliptic in the sense of~\cite[Section~2]{AtE12:sect-form}, the sufficient condition in Proposition~\ref{prop:suff-inv} is actually necessary, see~\cite[Proposition~2.9]{AtE12:sect-form}.
This allows to give an easy abstract proof of Lemma~\ref{lem:ouh-Q-ineq}.

\begin{proof}[{Proof of Lemma~\ref{lem:ouh-Q-ineq}}]
Observe that $Q$ is the orthogonal projection of $L^2(\Omega)$ onto the closed unit ball of $L^\infty(\Omega)$. Since the $C_0$-semigroup generated by the Neumann Laplacian is $L^\infty$-contractive, \eqref{eq:Q-ineq} follows from~\cite[Proposition~2.9]{AtE12:sect-form}.
\end{proof}

\smallskip
\noindent\textbf{Acknowledgements.} This paper was inspired by the wonderful conference “PDEs and Semigroups in Applied Analysis” in honour of J.~Goldstein and S.~Romanelli which took place in Bari, July 2021. We would also like to thank the anonymous referee for their valuable input that led us to extend the exposition in Section~\ref{sec:W-Lap-apptrace} to allow fractal domains and to investigate continuity of the kernel of the semigroup for a Lipschitz domain.

\newcommand{\etalchar}[1]{$^{#1}$}

\end{document}